\numberwithin{equation}{section}
\newtheorem{lem}{Lemma}
\newtheorem{lemma}[lem]{Lemma}
\numberwithin{lem}{section}
\newtheorem{thm}{Theorem}
\newtheorem{theorem}[thm]{Theorem}
\numberwithin{thm}{section}
\numberwithin{cor}{section}
\newtheorem{defi}{Definition}
\newtheorem{definition}[defi]{Definition}
\numberwithin{defi}{section}
\newtheorem{rem}{Remark}
\newtheorem{remark}[rem]{Remark}
\numberwithin{rem}{section}
\title{ On the minimal length of addition chains}
\author{Jean-Marie De Koninck,  Nicolas Doyon,  William Verreault}
\date{}
\begin{document}

\maketitle

\begin{abstract}
We denote by $\ell(n)$ the minimal length of an addition chain leading to $n$ and we define the counting function $$
F(m,r):=\#\left\{n\in[2^m, 2^{m+1}):\ell(n)\le m+r\right\},
$$
where $m$ is a positive integer and $r\ge 0$ is a real number. We show that for $0< c<\log 2$ and for any $\varepsilon>0$, we have as $m\to \infty$,
$$
F\left(m,\frac{cm}{\log m}\right)<\exp\left(cm+\frac{\varepsilon m\log\log m}{\log m}\right)
$$
and
$$
F\left(m,\frac{cm}{\log m}\right)>\exp\left(cm-\frac{(1+\varepsilon)cm\log\log m}{\log m}\right).
$$
This extends a result of Erd\H{o}s which says that for almost all $n$, as $n\to\infty$,
$$
\ell(n)=\frac{\log n}{\log 2}+\left(1+o(1)\right)\frac{\log n}{\log \log n}.
$$
\end{abstract}
\vskip 10pt

\noindent

AMS subject classification numbers:  11B83, 11Y55

Key words and phrases: Scholz conjecture, shortest addition chain, distribution of arithmetic functions
\vskip 10pt

\section{Introduction}
We say that $1=a_0<a_1<\cdots<a_k=n$ is an {\it addition chain} of length $k$ if $a_j=a_s+a_i$ with $0\le i,s<j$ for all $j,\, 1\le j\le k$. In other words, a term in an addition chain is obtained either by adding two distinct previous terms or by doubling a previous term.

For example, the following is an addition chain:
\begin{eqnarray*}
a_0&=&1,\\
a_1&=&a_0+a_0=1+1=2,\\
a_2&=&a_1+a_1=2+2=4,\\
a_3&=&a_2+a_0=4+1=5,\\
a_4&=&a_3+a_1=5+2=7,\\
a_5&=&a_3+a_2=5+4=9.
\end{eqnarray*}

A question of interest is: {\it Given a positive integer $n$, what is the shortest addition chain leading to $n$?} For example, if $n=10$, the following addition chain leads to $10$: $a_0=1, a_1=1+1=2, a_2=2+2=4, a_3=4+4=8, a_4=8+2=10.$  We can show that there exists no shorter addition chain leading to $10$.

When $n$ is a power of two, say $n=2^r$, the shortest addition chain leading to $n$ is of length $r$ and consists only of doubling steps, since we have the addition chain $a_0=1, a_1=2a_0=2,  a_2=2a_1=4,\ldots,  a_r=2a_{r-1}=2^r$.

Observe that the shortest addition chain leading to $n$ is not necessarily unique. For instance, we have $a_0=1, a_1=2, a_3=4, a_4=5$ as well as $b_0=1, b_1=2, b_2=3, b_4=5$ both being chains of minimal length leading to 5.

\begin{definition}
We denote by $\ell(n)$ the length of a shortest addition chain leading to $n$.
\end{definition}

In this note, we investigate a question related to the distribution of $\ell(n)$: {\it For a given positive integer $k$, how many integers $n$ can be obtained by an addition chain of length at most $k$?}   For example, if we take $k=3$, the following integers can be obtained by addition chains of length at most 3:
\begin{eqnarray*}
a_0=\mathbf{1},\\
a_0=1, a_1=\mathbf{2},\\
a_0=1, a_1=2, a_2=\mathbf{3},\\
a_0=1, a_1=2, a_2=\mathbf{4},\\
a_0=1, a_1=2, a_2=3, a_3=\mathbf{5},\\
a_0=1, a_1=2, a_2=3, a_3=\mathbf{6},\\
a_0=1, a_1=2, a_2=4, a_3=\mathbf{8}.
\end{eqnarray*}
It follows that $\mathbf{7}$ different integers can be obtained by addition chains of length at most $3$.

\begin{definition}
We denote by $H(k)$ the number of distinct integers that can be obtained by an addition chain of length at most $k$.  Explicitly,
$$
H(k):=\#\{n:\ell(n)\le k\}.
$$

\end{definition}

Given that the largest integer that can be obtained by an addition chain of length $k$ is $2^k$, we trivially have
$$
H(k)\le 2^k.
$$
It turns out that, in some sense, this simple upper bound is relatively close to the truth. The sequence $H(n)$ begins with
$$
H(1)=2,\, H(2)=4,\, H(3)=7,\,H(4)=12,\ldots
$$

Doubling steps, that is steps given by $a_r=2a_{r-1}$, play an important role in addition chains since for addition chains of minimal length, most of the steps must be doubling steps. It is therefore natural to work with intervals of the form $[2^m, 2^{m+1})$.
 Instead of working directly with the function $H$, we will investigate a closely related function.

\begin{definition}
Let $m$ be a positive integer and $r$ a positive real number. We define
$$
 F(m,r):=\#\left\{n\in [2^m,2^{m+1}):\ell(n)\le m+r\right\}.
 $$
\end{definition}
We will investigate the asymptotic behaviour of the function $F$. A critical region is given by
$$
r=\frac{cm}{\log m},
$$
where $0< c< \log 2$ is a fixed constant and $m\to \infty$.  As we will discuss in the next section, previous results imply that $F(m,r)=2^m$ when $c>\log 2$ and $m$ is large enough.

Relying on this notation, we will show
$$
F\left(m,\frac{cm}{\log m}\right)=\exp\left(cm(1+o(1))\right),
$$
as well as more precise results.

\section{Previous work}

Lower and upper bounds for the function $\ell(n)$ have been obtained.

Arnold Scholz \cite{scholz} seems to have been the first (in 1937) to study the length of an addition chain. In his paper, one can find what is now known as the {\it Scholz conjecture}, namely that
$$\ell(2^{n+1}-1)\le n +\ell(n+1).$$
This conjecture has been shown to be true for all $n\le 5\, 784\,688$ by Clift \cite{clift}. This conjecture has also been shown to be true for infinite families of integers, see \cite{bahig} and \cite{tall}.

Observe that it is easy to establish that
$$\ell(n) \le \log_2 n +\nu(n),$$
where $\log_2 m= \log m/\log 2$ and $\nu(n)$ is the number of digits equal to one in the binary expansion of $n$.
The best  result in the direction of the Scholz conjecture is the one obtained in 1939 by Arnold T. Brauer \cite{brauer}, namely the upper bound
$$\ell(2^{n+1}-1) \le n +\ell^*(n+1),$$
where  $\ell^*(n)$ is the minimum length of only those addition chains in which every $a_p$ can be written as $a_{p-1} + a_r$, $r\le p-1$. In fact, Brauer proved that
$$\ell(n) \le \min_{1\le r \le m} \left\{ \left( 1+ \frac 1r \right) \log_2 n +2^r -2 \right\},$$
where $2^m \le n <2^{m+1}$. In Brauer's upper bound, choosing $\displaystyle{ r = (1-\varepsilon) \frac{\log \log n}{\log 2}  }$, it follows that
for $n$ sufficiently large,
\begin{equation} \label{eq:erdos-3}
\ell(n) \le \log_2 n +(1+\varepsilon) \frac{\log n}{\log \log n}.
\end{equation}
In 1960, Erd\H{o}s \cite{erdos} showed that \eqref{eq:erdos-3} is best possible in the sense that for almost all $n$ (i.e., for all $n$ except a sequence of density 0),
$$\ell(n) = \log_2 n + (1+o(1)) \frac{\log n}{\log \log n}.$$

In 1975, Arnold Sch\"onhage \cite{schonhage} proved that
$$\ell(n)\ge \log_2 n + \log_2 \nu(n) -2.13.$$

To summarize, what has been proved so far unconditionally is that
$$\log_2 n + \log_2 \nu(n) -2.13 \le \ell(n) \le \lceil \log_2 n \rceil +\nu(n)-1.$$

Providing a more concrete bound, Thurber \cite{thurber2} showed that if $\nu(n)\ge 9$, then
$$
\ell(n)\ge \lfloor \log_2 n \rfloor+4.
$$

The problem of numerically computing $\ell(n)$ for a given $n$ is also an important challenge. Methods to effectively generate an addition chain of minimal length leading to a given integer can be found in \cite{thurber1}.

A broad view of the results regarding addition chains can be found in the book of Richard Guy \cite[p.169--171]{guy}.

\section{Result statement}

Our main results consist in an upper bound and a lower bound for the function $F$.

\begin{theorem}\label{t1}
For any $\varepsilon>0$ and for $0< c<\log 2$, we have for $m$ large enough
$$
F\left(m,\frac{cm}{\log m}\right)<\exp\left(cm+\frac{\varepsilon m\log\log m}{\log m}\right).
$$
\end{theorem}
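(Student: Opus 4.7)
The plan is to bound $F(m, r)$ from above by counting the addition chains of length at most $k := m + r$ that terminate in $[2^m, 2^{m+1})$, since each integer $n$ counted by $F(m, r)$ is the endpoint of some such chain. Each chain is encoded by the sequence $((i_j, s_j))_{j=1}^k$ with $a_j = a_{i_j} + a_{s_j}$ and $0 \le i_j \le s_j < j$; a doubling step (where $a_j = 2 a_{j-1}$) uniquely corresponds to the pair $(j-1, j-1)$, while a non-doubling step at position $j$ admits at most $\binom{j+1}{2} - 1$ pairs. Classifying each step as \emph{big} (when $\lfloor \log_2 a_j \rfloor$ strictly increases) or \emph{small}, one notes that every chain terminating in $[2^m, 2^{m+1})$ has exactly $m$ big and $r = k - m$ small steps; since every doubling step is big, all $r$ small steps are non-doubling.

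The heart of the argument is a sharp combinatorial estimate. A naive count -- specifying the $p$ positions of non-doubling steps in $\binom{k}{p}$ ways and then freely choosing the pair at each -- yields a bound of the order $\exp(2cm)$, twice too large in the exponent. To tighten this to the target $cm$, I would exploit two structural features of valid chains: first, the constraints $a_{j-1} < a_j \le 2a_{j-1}$ together with $a_k < 2^{m+1}$ force, at each non-doubling step, one of the two addend indices to essentially equal $j - 1$ (as in a Brauer chain), reducing the per-step count from $O(k^2)$ to $O(k)$; second, between consecutive small steps the chain merely doubles, so each candidate addend is a power-of-two multiple of one of at most $r + 1$ \emph{basic} values (the outputs of the previous small steps together with $a_0 = 1$), which bounds the effective per-step alphabet by $O(r)$. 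Combining these and summing over the positions of non-doubling steps yields a bound of the shape $F(m, r) \le \binom{k}{r}\, r^r \cdot m^{O(1)}$.

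Applying Stirling's formula with $r = cm / \log m$ and $k = m + r$, one has $\binom{k}{r} r^r \le (ek)^r$, whose logarithm is
\begin{equation*}
r(1 + \log k) \;=\; cm + O\!\left(\frac{cm}{\log m}\right) \;=\; cm + o\!\left(\frac{m \log\log m}{\log m}\right),
\end{equation*}
which lies below $cm + \varepsilon m \log\log m / \log m$ for any fixed $\varepsilon > 0$ and $m$ large enough, giving the theorem. The main technical obstacle is the second structural observation: showing that the chain's doubling skeleton compresses the per-step addend alphabet from $O(k)$ to $O(r)$ requires a careful inductive accounting of the interleaved big and small steps. Once this compression is in place, the remaining derivation is a routine Stirling computation.
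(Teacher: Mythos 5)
Your strategy is to bound $F(m,r)$ by the total number of addition chains of length at most $k=m+r$ that end in $[2^m,2^{m+1})$, and the estimate that would make this work---the compression of the per-step addend alphabet to $O(r)$---is both unjustified and false. First, a small point: it is not true that ``between consecutive small steps the chain merely doubles''; a non-doubling step can cross a power of $2$ (e.g.\ $5+4=9$), so your basic set would at least have to include the outputs of all non-doubling steps (still $O(r)$ of them, by the Fibonacci-type bound of Lemma \ref{l1}). The fatal point is that even then, an addend is a power-of-two multiple of a basic value, and specifying it requires specifying the exponent, which ranges over $\Theta(m)$ values; nothing in the chain structure pins it down. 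Concretely, consider chains consisting of $m$ doublings and $r$ small steps, each small step adding an element at least $r+2\log_2 r$ positions older than the current one (so that the total multiplicative gain from the small steps is below $2$ and the endpoint stays in $[2^m,2^{m+1})$): at each such step there are $\Theta(m)$ admissible addend indices, and distinct choices give distinct chains. Hence the number of chains you are counting is at least of order $\binom{m/2}{r}(m/4)^r=\exp\bigl(cm+(c-o(1))\frac{m\log\log m}{\log m}\bigr)$, which exceeds your claimed bound $\binom{k}{r}r^r m^{O(1)}=\exp(cm+O(m/\log m))$, and---worse---exceeds the target $\exp(cm+\varepsilon m\log\log m/\log m)$ whenever $\varepsilon<c$. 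So no bound on the raw number of chains, however carefully the per-step choices are counted, can prove the theorem; your first structural claim (one addend index ``essentially equal to $j-1$'') is also not literally true, but that part is repairable along the lines of the block observation preceding Lemma \ref{l10}.

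This is exactly the obstacle the paper's proof is organized around. Counting chains, with the classification into doubling, large, midsize and small steps and the block refinement of Lemmas \ref{l10} and \ref{l11}, only yields Lemma \ref{l12}, i.e.\ the exponent $cm+(1+\varepsilon)c\,m\log\log m/\log m$; the surplus term comes from the $\binom{\lfloor m+r\rfloor}{K}$ placements of the additive blocks and is genuinely present in the chain count, as the family above shows. The passage to an arbitrary $\varepsilon$ requires counting at most one chain per integer: the paper discards dominated (invalid) chains and shows that type~2 blocks are marked, i.e.\ must have an element reused in a later block, which recovers a factor $\exp\bigl(-(1+o(1))K\log\log m\bigr)$ cancelling the placement factor. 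Your proposal has no analogue of this validity/domination step, and without it (or a correct substitute for the unsupported $O(r)$-alphabet claim) the argument does not reach the stated bound.
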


\begin{theorem}\label{t2}
For any $\varepsilon>0$ and for $0< c<\log 2$, we have for $m$ large enough
$$
F\left(m,\frac{cm}{\log m}\right)>\exp\left(cm-\frac{c(1+\varepsilon)m\log\log m}{\log m}\right).
$$
\end{theorem}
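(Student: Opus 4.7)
The plan is to construct, via a two-stage explicit procedure, a large family of integers in $[2^m, 2^{m+1})$ that admit addition chains of length at most $m + cm/\log m$, and then to count this family carefully enough to capture the stated correction term. Set $B := \lfloor cm/\log m\rfloor$ and pick an integer parameter $q\ge 2$ to be optimized (the right choice will be $q\asymp B/\log m$). First I would form the linear ``digit-chain''
$$
c_j = j \quad (1\le j\le q), \qquad c_{j+1} = c_j + c_1,
$$
using $q-1$ addition steps; this places the set $D=\{1,\ldots,q\}$ into the chain. Then, starting from $c_q=q$, I interleave $d := m-\lfloor\log_2 q\rfloor$ doublings with $a$ additions of elements of $D$, the choice of $a$ being made so that $q+a\le B+\lfloor\log_2 q\rfloor+1$ (keeping total length $\le m+B$). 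Parametrising the run by the digits $c_{i_j}\in D$ chosen at the $j$-th addition and by the number $d_j$ of doublings performed after it, the chain terminates at
$$
n \;=\; q\cdot 2^d \;+\; \sum_{j=1}^a c_{i_j}\,2^{d_j}, \qquad d\ge d_1\ge d_2\ge\cdots\ge d_a\ge 0.
$$
Because $q\in[2^{\lfloor\log_2 q\rfloor},2^{\lfloor\log_2 q\rfloor+1})$, the leading term $q\cdot 2^d$ already lies in $[2^m,2^{m+1})$.

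Next, I would restrict to tuples $(d_j)$ that are strictly decreasing with gaps $d_j-d_{j+1}\ge\lceil\log_2 q\rceil+1$ and with $d_1\le d-\lceil\log_2 q\rceil-1$; this forces the binary supports of the summands $c_{i_j}\,2^{d_j}$ and of $q\cdot 2^d$ to be pairwise disjoint, so distinct tuples $((c_{i_j},d_j))$ produce distinct $n\in[2^m,2^{m+1})$. A stars-and-bars count then gives at least
$$
\binom{d-a\lceil\log_2 q\rceil}{a}\cdot q^a
$$
admissible integers, each satisfying $\ell(n)\le m+B$.

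Finally, I would optimize by taking $q:=\lceil B/\log m\rceil$ and $a$ the largest integer with $q+a\le B+\lfloor\log_2 q\rfloor+1$. Then $\log q=\log m-2\log\log m+O(1)$ and $a\lceil\log_2 q\rceil = cm/\log 2 + O(cm\log\log m/\log m)$; crucially, the hypothesis $c<\log 2$ makes $d-a\lceil\log_2 q\rceil$ a positive multiple of $m$. A Stirling expansion yields
$$
a\log q \;=\; cm-\frac{2cm\log\log m}{\log m}+O\!\left(\frac{cm}{\log m}\right),
$$
$$
\log\binom{d-a\lceil\log_2 q\rceil}{a}\;=\;\frac{cm\log\log m}{\log m}+O\!\left(\frac{cm}{\log m}\right),
$$
and summing these gives
$$
\log F\!\left(m,\tfrac{cm}{\log m}\right)\;\ge\;cm-\frac{cm\log\log m}{\log m}+O\!\left(\frac{cm}{\log m}\right),
$$
which, for any fixed $\varepsilon>0$ and all $m$ large enough, is strictly greater than $cm - c(1+\varepsilon)m\log\log m/\log m$.

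The main obstacle I expect is the sharp cancellation that produces the coefficient $c$ (and not $2c$) in front of the correction term. A naive counting in the spirit of Brauer's window method, with a precomputed digit set of size $P+1$ used rigidly in every block, gives only $\log F \ge cm - 2cm\log\log m/\log m$, which yields the theorem just for $\varepsilon\ge 1$. The improvement to arbitrary $\varepsilon>0$ hinges on the binomial factor $\binom{d-a\lceil\log_2 q\rceil}{a}$ coming from the $\approx m$ available positions for each addition: its logarithm contributes precisely $+cm\log\log m/\log m$, cancelling half of the $-2cm\log\log m/\log m$ loss in $a\log q$. Making this cancellation rigorous, and verifying that the condition $c<\log 2$ is exactly the one guaranteeing positivity of the binomial's top, is the technical heart of the argument.
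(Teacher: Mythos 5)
Your construction is essentially the paper's own constructive proof in light disguise: precompute a small ``digit set'' of size about $cm/(\log m)^2$ at a cost of that many steps, then spend about $cm/\log m$ additions inserting these digits between runs of doublings, and count (choices of digits) times (choices of positions). Your $q$ plays the role of the paper's $2^u\approx r-k$, your positions $d_j$ play the role of the paper's runs of zeros $s_j$, the positional binomial contributes the crucial $+cm\log\log m/\log m$ that upgrades the naive coefficient $2c$ to $c$, and $c<\log 2$ guarantees the top of that binomial is a positive multiple of $m$, exactly as in the paper. Your parameter choices and asymptotics are correct and lead to the same final bound $\exp\left(cm-\frac{cm\log\log m}{\log m}+O\left(\frac{m}{\log m}\right)\right)$.

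There is, however, a genuine gap in the injectivity step. With digit set $D=\{1,\dots,q\}$ you claim that, once the gap conditions on the $d_j$ make the binary supports disjoint, distinct tuples $((c_{i_j},d_j))$ give distinct $n$. This is false when even digits are allowed, since $c\cdot 2^{d_j}=(c/2)\cdot 2^{d_j+1}$: for instance $(2,e)$ and $(1,e+1)$ are distinct admissible choices producing the same summand, and both typically satisfy your gap constraints. The multiplicity of representations of a single $n$ can be as large as $(\lfloor\log_2 q\rfloor+1)^{a}=\exp\left(\Theta\left(\frac{m\log\log m}{\log m}\right)\right)$ (take $n$ whose lower summands are isolated single bits), which is precisely the order of the correction term you are fighting for; dividing by this worst-case overcount would destroy the gain and leave you with the weaker $2c$ bound. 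The repair is exactly the device the paper builds in from the start: restrict the digits to odd numbers, e.g.\ the odd integers in $(q/2,q]$ (the analogue of the paper's odd $U_j\in[2^{u-1},2^u]$). Then the lowest set bit of $n$ identifies $d_a$, the window of $\lfloor\log_2 q\rfloor+1$ bits above it identifies $c_{i_a}$, and the whole tuple is recovered by induction, so the count is honest; the loss in the number of digit choices is only a factor $\exp(O(a))=\exp(O(m/\log m))$, which is absorbed in the error term.
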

These results partially answer a comment of Paul Erd\H{o}s \cite{erdos} who wrote that it would be interesting to obtain an asymptotic value for the distribution of the function $\ell(n)$.

\section{Proof of the upper bound}

The proof of the upper bound is the most complicated of the two and accordingly, it is divided into several subsections.

\subsection{Notation and definitions}

We begin the proof of the upper bound by providing a few more definitions. We denote by $\gamma$ the golden ratio
$$
\gamma:=\frac{1+\sqrt{5}}{2}\approx 1.618.
$$

\begin{definition}
For a fixed integer $m\ge 2$, we define
$$
\delta=\delta(m):=\frac{1}{\log m}.
$$
\end{definition}

 We expand upon the ideas of Erd\H{o}s as we will divide the addition steps into several types.

\begin{definition}
Let $m>4$ be a positive integer and $r$ a positive real number. Consider an addition chain consisting of $1=a_0<\cdots< a_{\lfloor m+r\rfloor }=n$ with $n\in [2^m, 2^{m+1})$.  Let the set ${\cal A}$ consist of the {\it doubling } steps of this chain.  We subdivide the nondoubling steps into three sets ${\cal B}$, ${\cal C}$ and ${\cal D}$ according to their associated  relative growth rate:
\begin{eqnarray*}
{\cal A}:&=&\{1\le j\le m+r: a_j=2a_{j-1}\},\text{ the doubling steps,}\\
{\cal B}:&=&\{1\le j\le m+r: \gamma a_{j-1}\le a_j<2a_{j-1}\}, \text{ the large steps,}\\
{\cal C}:&=&\{1\le j\le m+r:(1+\delta)a_{j-1}\le a_{j}<\gamma a_{j-1}\}, \text{ the midsize steps,}\\
{\cal D}:&=&\{1\le j\le m+r:a_j<(1+\delta)a_{j-1}\}, \text{ the small steps}.
\end{eqnarray*}

 We denote their respective cardinalities by
$$
A:=\#{\cal A},\, B:=\#{\cal B},\, C:=\#{\cal C}, \, D:=\#{\cal D}.
$$
\end{definition}

\subsection{\bf Remarks on the size of the sets ${\cal A}$, ${\cal B}$, ${\cal C}$ and ${\cal D}$}

We now obtain bounds on $A$, $B$, $C$ and $D$.

\begin{lemma}\label{l1}
Suppose that an integer $n \in [ 2^m, 2^{m+1})$ is obtained by an addition chain of length $\lfloor m+r \rfloor $ such that the number of steps of each type is given by $A$, $B$, $C$ and $D$, respectively. Then
$$
B+C+D\le r \frac{1}{1-\log_2\gamma}.
$$
\end{lemma}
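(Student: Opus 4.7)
My plan is to recast the inequality $B+C+D \le r/(1-\log_2\gamma)$ as an upper bound on a certain ``deficit'' sum along the chain. Setting $k := \lfloor m+r\rfloor$, define the deficit of step $j$ to be $1 - \log_2(a_j/a_{j-1})$. Telescoping gives $\sum_{j=1}^{k}\bigl[1 - \log_2(a_j/a_{j-1})\bigr] = k - \log_2 n$, and since $n \ge 2^m$ and $k \le m+r$, the total deficit is at most $r$. The game is then to lower-bound this total by $(B+C+D)(1-\log_2\gamma)$.

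The engine of the argument is a Fibonacci-type inequality for non-doubling steps. Writing $a_j = a_s + a_i$ with $(s,i) \ne (j-1,j-1)$ and splitting into the cases $s=i<j-1$ versus $s \ne i$, one checks that $a_j \le a_{j-1}+a_{j-2}$, equivalently $a_j/a_{j-1} \le 1 + a_{j-2}/a_{j-1}$. I would apply this to any step $j \in {\cal B}$ to derive three facts: (i) step $j-1$ cannot lie in ${\cal A}$, for otherwise $a_{j-2}/a_{j-1} = 1/2$ and the above bound forces $a_j/a_{j-1} \le 3/2 < \gamma$; (ii) step $j-1$ cannot lie in ${\cal B}$ either, because combining $R_j,R_{j-1}\ge\gamma$ with the non-doubling inequality $R_j \le 1 + 1/R_{j-1}$ forces $R_j = R_{j-1} = \gamma$ exactly, contradicting the rationality of $a_{j-1}/a_{j-2}$ together with the irrationality of $\gamma$; and (iii) combining the non-doubling bound with $a_j \ge \gamma a_{j-1}$ yields $a_{j-2}/a_{j-1} \ge \gamma - 1 = 1/\gamma$, so $a_j/a_{j-2} \le 1 + a_{j-1}/a_{j-2} \le 1+\gamma = \gamma^2$, giving a combined deficit over steps $j-1$ and $j$ of at least $2(1-\log_2\gamma)$.

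With (i) and (ii) in hand, the immediate predecessor of each ${\cal B}$-step lies in ${\cal C}\cup{\cal D}$, and distinct ${\cal B}$-steps produce distinct predecessors by (ii), so the pairing ``${\cal B}$-step together with its predecessor'' uses $2B$ disjoint indices. Summing the deficits yields
\[
r \;\ge\; \text{total deficit} \;\ge\; 2B(1-\log_2\gamma) + \bigl((C+D)-B\bigr)(1-\log_2\gamma) = (B+C+D)(1-\log_2\gamma),
\]
where the second term bounds the contribution of the $(C+D)-B$ unpaired elements of ${\cal C}\cup{\cal D}$, each of which has ratio strictly below $\gamma$ and hence individual deficit exceeding $1-\log_2\gamma$. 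Dividing by $1-\log_2\gamma$ yields the lemma.

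The main obstacle I expect is really fact (ii): without ruling out consecutive ${\cal B}$-steps, the pairings can overlap and one obtains only the weaker bound $B+C+D \le 2r/(1-\log_2\gamma)$. The irrationality of $\gamma$ is exactly what closes this gap and explains the shape of the constant $1/(1-\log_2\gamma)$ in the statement; the remaining edge cases ($j=1$ is automatically doubling, so ${\cal B}$-steps have $j\ge 2$ and $a_{j-2}$ is well defined) are routine.
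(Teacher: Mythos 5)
Your proof is correct, but it takes a different route from the paper. The paper disposes of this lemma in three lines by citing Sch\"onhage's Fibonacci-type inequality $n\le 2^{A}\gamma^{B+C+D}$ (proved by induction in \cite{schonhage}) and then doing the same algebra you do at the end; note that your deficit bound $\sum_j\bigl(1-\log_2(a_j/a_{j-1})\bigr)\ge (B+C+D)(1-\log_2\gamma)$ is exactly this inequality in logarithmic form, so what you have really done is supply a self-contained, citation-free proof of it. Your mechanism is an amortized pairing argument: each ${\cal B}$-step is paired with its immediate predecessor, which your facts (i) and (ii) force into ${\cal C}\cup{\cal D}$ (this is precisely the content of the paper's Lemma \ref{l3}, proved there later and for a different purpose, and your irrationality argument matches the one used there), and the two-step ratio bound $a_j/a_{j-2}\le\gamma^2$ from fact (iii) gives each disjoint pair a deficit of at least $2(1-\log_2\gamma)$, with the unpaired ${\cal C}\cup{\cal D}$ steps contributing at least $1-\log_2\gamma$ each and the doubling steps contributing $0$. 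The only wording to tighten is the claim that distinct predecessors follow ``by (ii)'': distinctness of the predecessors is automatic since $j\mapsto j-1$ is injective; what (i) and (ii) actually buy you is that no ${\cal B}$-step is itself the predecessor of another ${\cal B}$-step, which is what makes the pairs $\{j-1,j\}$, $j\in{\cal B}$, pairwise disjoint. What your approach buys is independence from the external reference and an explicit explanation of where the constant $1/(1-\log_2\gamma)$ comes from; what the paper's approach buys is brevity, since the inductive inequality is simply quoted.
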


\begin{proof}
In \cite{schonhage}, it was shown by induction that
\begin{equation}\label{e1}
n\le 2^A \gamma^{B+C+D}.
\end{equation}
 The intuition being that a chain with additive (nondoubling) steps cannot grow faster than the Fibonacci sequence.

Using the fact that $A+B+C+D=\lfloor m+r\rfloor$ and  $n\ge 2^m,$ we obtain from (\ref{e1}) that
$$
2^m\le 2^{m+r-(B+C+D)}\gamma^{B+C+D},
$$
which yields
$$
1\le 2^r \left(\frac{\gamma}{2}\right)^{B+C+D}.
$$
From this, we get
$$
B+C+D\le r\frac{1}{1-\log_2 \gamma},
$$
completing the proof of Lemma \ref{l1}.
\end{proof}
We also have the following upper bound on the size of the set ${\cal D}$.
\begin{lemma}\label{l2}
Suppose that an integer $n \in [2^m,  2^{m+1})$ is obtained by an addition chain of length $\lfloor m+r\rfloor $ such that the number of steps of each type is given by $A$, $B$, $C$ and $D$, respectively. We have
$$
D\le \frac{r -C(1-\log_2\gamma)}{1-\log_2(1+\delta)}.
$$
\end{lemma}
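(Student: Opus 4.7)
The plan is to mimic the structure of the proof of Lemma \ref{l1}, but instead of invoking the Schönhage bound $n\le 2^A\gamma^{B+C+D}$, I will use a finer step-by-step bound on the growth factors $a_j/a_{j-1}$ that distinguishes the four types. The point is that the definitions of $\mathcal{B}$, $\mathcal{C}$, $\mathcal{D}$ give explicit individual upper bounds on the ratios: a step in $\mathcal{A}$ multiplies by exactly $2$, a step in $\mathcal{B}$ by at most $2$, a step in $\mathcal{C}$ by at most $\gamma$, and a step in $\mathcal{D}$ by at most $1+\delta$.

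Multiplying these ratios from $j=1$ up to $j=\lfloor m+r\rfloor$ (starting from $a_0=1$), I would obtain the simple product bound
$$
n \;<\; 2^{A+B}\,\gamma^{C}\,(1+\delta)^{D}.
$$
Taking $\log_2$ and using the hypothesis $n\ge 2^m$ yields
$$
m \;\le\; A+B+C\log_2\gamma + D\log_2(1+\delta).
$$

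The next step is purely algebraic. Since $A+B+C+D=\lfloor m+r\rfloor\le m+r$, I substitute $A+B\le m+r-C-D$ and rearrange:
$$
m \;\le\; m+r - C(1-\log_2\gamma) - D\bigl(1-\log_2(1+\delta)\bigr),
$$
so that
$$
D\bigl(1-\log_2(1+\delta)\bigr) \;\le\; r - C(1-\log_2\gamma).
$$
Because $m>4$ forces $\delta=1/\log m<1$ and hence $1-\log_2(1+\delta)>0$, division yields the claimed inequality.

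The only conceptual point that needs attention is the justification of the product bound: it is important that each growth factor bound is applied to a genuinely distinct index $j$, which is immediate since $\mathcal{A},\mathcal{B},\mathcal{C},\mathcal{D}$ partition the indices $\{1,\dots,\lfloor m+r\rfloor\}$. Unlike in Lemma \ref{l1}, no Fibonacci-type induction is needed here, because the definitions of $\mathcal{C}$ and $\mathcal{D}$ already give individual step bounds strictly below $2$; the Schönhage inequality was only required in Lemma \ref{l1} precisely because the non-doubling class ${\cal B}\cup{\cal C}\cup{\cal D}$ contains steps with ratio arbitrarily close to $2$. So I expect no real obstacle beyond careful bookkeeping of the exponents.
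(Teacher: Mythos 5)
Your proof is correct and follows essentially the same route as the paper: the bound $n\le 2^{A+B}\gamma^{C}(1+\delta)^{D}$ obtained from the per-step ratio bounds given by the definitions of the four sets, followed by the substitution $A+B+C+D=\lfloor m+r\rfloor$ and the same rearrangement after taking $\log_2$. Your closing remark correctly identifies that, unlike Lemma \ref{l1}, no Fibonacci-type (Sch\"onhage) argument is needed here, which is exactly how the paper proceeds.
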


\begin{proof}
Using the definition of the sets ${\cal A}$, ${\cal B}$, ${\cal C}$ and ${\cal D}$, we  have
\begin{eqnarray*}
2^m&\le& n \le 2^{A+B}\gamma^C (1+\delta)^{D}\\
&\le &2^{m+r-C-D}\gamma^C(1+\delta)^D.
\end{eqnarray*}
From this, we get
$$
1\le 2^r\left(\frac{\gamma}{2}\right)^C\left(\frac{1+\delta}{2}\right)^D.
$$
Taking the base 2 logarithm of each side of the inequality yields
$$
0\le r -C(1-\log_2\gamma)-D(1-\log_2(1+\delta)),
$$
which we can rearrange to conclude
$$
D\le \frac{r -C(1-\log_2\gamma)}{1-\log_2(1+\delta)},
$$
thus proving Lemma \ref{l2}.
\end{proof}

\subsection{The number of ways of choosing the sets ${\cal A}$, ${\cal B}$, ${\cal C}$ and$\,{\cal D}$}

The sets ${\cal A}$, ${\cal B}$, ${\cal C}$ and ${\cal D}$ specify the {\it position} in the addition chain of the steps of a given type.

We define
$$
S=S(A,B,C,D,m,r)
$$
as the number of ways of choosing these sets for given values of $A, B, C, D, m$ and $r$.

It is trivial that
$$
S\le {\lfloor m+r\rfloor \choose A, B, C, D}
$$
where we use the multinomial coefficient notation:
$$
{\lfloor m+r\rfloor \choose A, B, C,D}=\frac{\lfloor m+r\rfloor !}{A!B!C!D!}
$$
with $\lfloor m+r\rfloor=A+B+C+D$.

We can however improve upon this by observing the following result.
\begin{lemma}\label{l3}
A step in ${\cal B}$ must be preceded by a step in either ${\cal C}$ or ${\cal D}$.
\end{lemma}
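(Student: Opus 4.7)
The plan is to argue by contradiction. Suppose step $j \in {\cal B}$ yet step $j-1 \in {\cal A} \cup {\cal B}$. Note that $j \ge 2$ automatically, since step $1$ is the doubling $a_1 = 2a_0 = 2 \in {\cal A}$. Both possibilities for step $j-1$ imply $a_{j-2} \le a_{j-1}/\gamma$ (strictly in the ${\cal A}$ case because $\gamma < 2$, and directly from $\gamma a_{j-2} \le a_{j-1}$ in the ${\cal B}$ case). I would then examine the representation $a_j = a_s + a_i$ with $s, i < j$ and split on whether or not $a_{j-1}$ appears among $a_s, a_i$.

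In the first subcase, neither $s$ nor $i$ equals $j-1$, so both indices are at most $j-2$ and hence $a_j \le 2a_{j-2} \le 2a_{j-1}/\gamma$. The identity $\gamma^2 = \gamma + 1 > 2$ gives $2/\gamma < \gamma$, which yields $a_j < \gamma a_{j-1}$, contradicting $j \in {\cal B}$.

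In the remaining subcase, exactly one of $s, i$ equals $j-1$ (both would make step $j$ a doubling and hence in ${\cal A}$), so without loss of generality $a_j = a_{j-1} + a_i$ with $i \le j-2$. The lower bound $a_j \ge \gamma a_{j-1}$ then gives $a_i \ge (\gamma-1)a_{j-1} = a_{j-1}/\gamma$, using the key identity $\gamma - 1 = 1/\gamma$ (equivalent to $\gamma^2 = \gamma + 1$). Combining with $a_i \le a_{j-2} \le a_{j-1}/\gamma$, every inequality in this chain must be an equality, and in particular $a_{j-2} = a_{j-1}/\gamma$. But in the ${\cal A}$ case one has $a_{j-2} = a_{j-1}/2 < a_{j-1}/\gamma$, a contradiction, while in the ${\cal B}$ case $a_{j-2} = a_{j-1}/\gamma$ forces $a_{j-1}/a_{j-2} = \gamma$, which is impossible since $\gamma$ is irrational and $a_{j-1}, a_{j-2}$ are positive integers.

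The whole proof rests on the two algebraic properties of $\gamma$ that motivate its choice as the threshold between ${\cal B}$ and ${\cal C}$, namely $\gamma^2 = \gamma + 1$ and its irrationality. I do not anticipate a substantial obstacle; the main care needed is in bookkeeping the two possibilities for step $j-1$ against the two possibilities for how $a_j$ is formed, and in the clean extraction of the bound $a_{j-2} \le a_{j-1}/\gamma$ that unifies the ${\cal A}$ and ${\cal B}$ cases.
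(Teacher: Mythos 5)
Your proof is correct and is essentially the paper's argument run in contrapositive form: both rest on the fact that a nondoubling step satisfies $a_j\le a_{j-1}+a_{j-2}$, the identity $\gamma-1=1/\gamma$, and the irrationality of $\gamma$ to dispose of the boundary case (the paper uses it to sharpen $a_j\ge\gamma a_{j-1}$ to a strict inequality, you use it to rule out $a_{j-1}=\gamma a_{j-2}$). The paper merely packages your two subcases into the single bound $a_j\le a_{j-1}+a_{j-2}$ and deduces $a_{j-1}<\gamma a_{j-2}$ directly, so there is no substantive difference.
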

\begin{proof}
Suppose that $j\in {\cal B}$. Since a step in ${\cal B}$ is not a doubling step, we have
$$
a_j\le a_{j-1}+a_{j-2}.
$$
By definition of the set ${\cal B}$, we also have
$$
a_j\geq \gamma a_{j-1},
$$
and since $a_j, a_{j-1}$ are integers while $\gamma$ is irrational, it must be that $a_j> \gamma a_{j-1}$.
Thus
$$
\gamma a_{j-1}< a_{j-1}+a_{j-2},
$$
from which we conclude
$$
a_{j-1}<\frac{1}{\gamma-1} a_{j-2}=\gamma a_{j-2}
$$
and
$$
j-1\in {\cal C}\cup {\cal D}. \qedhere
$$
\end{proof}
From the above, we can conclude the following bound.
\begin{lemma}\label{l4} Let $m$ be a positive integer and $r$ a positive real number such that $r=cm/\log m$ with $c$ a fixed constant, $0<c< \log 2$.  We have as $m\to \infty$
$$
S\le {\lfloor m+r\rfloor \choose C+D}\exp(O(r)).
$$
\end{lemma}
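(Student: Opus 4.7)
\medskip

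The plan is to bound $S$ by counting the choices in a structured order that exploits Lemma \ref{l3}. The idea is that Lemma \ref{l3} forces each step in $\mathcal{B}$ to sit immediately after a step in $\mathcal{C}\cup\mathcal{D}$, so once the positions of $\mathcal{C}\cup\mathcal{D}$ are fixed, there are only $C+D$ admissible slots for the elements of $\mathcal{B}$. In particular, the injective map $j\mapsto j-1$ from $\mathcal{B}$ into $\mathcal{C}\cup\mathcal{D}$ gives $B\le C+D$.

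Concretely, I would decompose the count of $S$ into three stages. First, I would choose the set of positions occupied by $\mathcal{C}\cup\mathcal{D}$ among the $\lfloor m+r\rfloor$ steps; this contributes the factor
\[
\binom{\lfloor m+r\rfloor}{C+D}.
\]
Second, I would split this set into $\mathcal{C}$ and $\mathcal{D}$, which contributes a factor of at most $2^{C+D}$. Third, I would choose $\mathcal{B}$ from the at most $C+D$ positions that lie immediately after an element of $\mathcal{C}\cup\mathcal{D}$ (by Lemma \ref{l3}), contributing at most $\binom{C+D}{B}\le 2^{C+D}$. The remaining positions are then forced to belong to $\mathcal{A}$, so no further choice is required. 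Combining these bounds gives
\[
S\le \binom{\lfloor m+r\rfloor}{C+D}\cdot 4^{C+D}.
\]

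To finish, I would invoke Lemma \ref{l1}, which yields $C+D\le B+C+D\le r/(1-\log_2\gamma)$, so that
\[
4^{C+D}\le \exp\!\left(\frac{(\log 4)\, r}{1-\log_2\gamma}\right)=\exp(O(r)),
\]
which produces the claimed bound. The statement's hypothesis $r=cm/\log m$ with $c<\log 2$ is not needed for this particular inequality; it is only used implicitly through the asymptotic notation $\exp(O(r))$.

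I do not expect any serious obstacle here: the combinatorial estimate is straightforward once Lemma \ref{l3} is in hand, and the only mild subtlety is remembering that Lemma \ref{l3} translates into an injection $\mathcal{B}\hookrightarrow\mathcal{C}\cup\mathcal{D}$ so that the candidate set for $\mathcal{B}$ has size exactly $C+D$ after $\mathcal{C}\cup\mathcal{D}$ is fixed.
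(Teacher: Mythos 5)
Your proof is correct and follows essentially the same route as the paper: choose the positions of ${\cal C}\cup{\cal D}$ via $\binom{\lfloor m+r\rfloor}{C+D}$, split them into ${\cal C}$ and ${\cal D}$ at a cost of $2^{C+D}$, use Lemma \ref{l3} to restrict where ${\cal B}$ can sit, and absorb all remaining factors into $\exp(O(r))$ via Lemma \ref{l1}. The only (harmless) difference is that you bound the choice of ${\cal B}$ by $\binom{C+D}{B}\le 2^{C+D}$ using the injection $j\mapsto j-1$, whereas the paper uses $\binom{B+C+D-1}{B-1}\le 2^{B+C+D}$, which is likewise $\exp(O(r))$ by Lemma \ref{l1}.
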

\begin{proof}
The number of ways of choosing the sets ${\cal C}$ and ${\cal D}$ is given by
$$
{\lfloor m+r\rfloor \choose C+D}{C+D\choose D}.
$$
By Lemma \ref{l3}, the number of ways to choose the set ${\cal B}$ is less  than or equal to
$$
{B+C+D-1\choose B-1}.
$$
We have thus shown
$$
S\le {\lfloor m+r\rfloor \choose C+D}{C+D\choose D}{B+C+D-1\choose B-1}.
$$
This implies
$$
S\le {\lfloor m+r\rfloor \choose C+D}2^{C+D}2^{B+C+D}.
$$
Given that from Lemma \ref{l1}, $B+C+D=O(r)$, we conclude
$$
S\le {\lfloor m+r\rfloor \choose C+D}\exp(O(r)).
$$

\end{proof}

We will also make use of the following lemma.

\begin{lemma}\label{l4a}
Assume as above that $m$ is a positive integer and that $r$ is a positive real number with $r=c m/\log m$ and $c$ a fixed constant such that $0<c <\log 2$. Further assume that the addition steps are grouped in $k$ sequences of consecutive addition steps. Then
$$
S\le {\lfloor m+r\rfloor \choose k}\exp(O(r))
$$
as $m\to \infty$.
\end{lemma}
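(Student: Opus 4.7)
The plan is to mirror the strategy of Lemma~\ref{l4}, but replace the bookkeeping by a finer parametrization that exploits the extra structural information now assumed: namely, that the non-doubling steps (those in $\mathcal{B} \cup \mathcal{C} \cup \mathcal{D}$) fall into exactly $k$ maximal runs of consecutive indices. The configuration of the sets $\mathcal{A}$, $\mathcal{B}$, $\mathcal{C}$, $\mathcal{D}$ will be described by three independent pieces of data: (i) the $k$ starting positions of the runs, (ii) the length of each run, and (iii) the type ($\mathcal{B}$, $\mathcal{C}$, or $\mathcal{D}$) of each non-doubling step. Only (i) will contribute a binomial coefficient depending on $m$; the other two contributions will be absorbed into the $\exp(O(r))$ factor by invoking Lemma~\ref{l1}.

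For (i), choosing $k$ distinct starting positions from $\lfloor m+r\rfloor$ positions gives at most $\binom{\lfloor m+r\rfloor}{k}$ possibilities. For (ii), the run lengths form a composition of $B+C+D$ into $k$ positive parts, so there are $\binom{B+C+D-1}{k-1} \le 2^{B+C+D}$ such compositions. For (iii), each of the $B+C+D$ non-doubling steps admits one of three type labels, contributing at most $3^{B+C+D}$. Combining these three overcounts yields
$$
S \le \binom{\lfloor m+r\rfloor}{k} \cdot 2^{B+C+D} \cdot 3^{B+C+D},
$$
and Lemma~\ref{l1} gives $B+C+D \le r/(1-\log_2\gamma) = O(r)$, so $2^{B+C+D}\cdot 3^{B+C+D} = \exp(O(r))$, which is the claimed bound.

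I do not expect a serious obstacle: the parametrization need only be an upper bound rather than a bijection, so the overcount arising from placing $\mathcal{B}$-labels without enforcing Lemma~\ref{l3} is harmless, as is the overcount from allowing the $k$ runs to touch (which would contradict their being maximal). The only point that requires a moment of care is confirming that the triple (i)--(iii) really does determine the tuple $(\mathcal{A},\mathcal{B},\mathcal{C},\mathcal{D})$, which it clearly does: (i) and (ii) jointly fix the set of non-doubling positions and hence $\mathcal{A}$ as well, and (iii) partitions the non-doubling positions among $\mathcal{B}$, $\mathcal{C}$, $\mathcal{D}$.
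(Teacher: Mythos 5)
Your proposal is correct and takes essentially the same approach as the paper: both factor the count of configurations into (placement of the $k$ blocks) $\times$ (composition of $B+C+D$ into $k$ positive run lengths) $\times$ (type labels in $\{\mathcal{B},\mathcal{C},\mathcal{D}\}$), and absorb the last two factors into $\exp(O(r))$ via Lemma \ref{l1}. The only cosmetic difference is in encoding the placement: you choose the $k$ starting positions directly, giving ${\lfloor m+r\rfloor \choose k}$, while the paper distributes the $A$ doubling steps into the $k+1$ gaps between blocks, giving ${A+k\choose k}\le {\lfloor m+r\rfloor\choose k}$.
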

\begin{proof}
Suppose that the steps in ${\cal B}\cup{\cal C}\cup{\cal D}$ are grouped into $k$ blocks and that the length of each block is respectively $b_1, b_2,\ldots, b_k$.  We then have
$$
b_1+b_2+\cdots+b_k=B+C+D.
$$
The number of ways of choosing the integers $b_1,\, b_2,\ldots, b_k$ is less than or equal to
$$
2^{B+C+D}=\exp(O(r)).
$$
The number of ways of choosing if the addition steps belong to ${\cal B}$, ${\cal C}$ or ${\cal D}$ is less than or equal to
$$
3^{B+C+D}=\exp(O(r)).
$$
It remains to specify how the $A$ doubling steps will be distributed amongst the $k$ additive blocks.  This is equivalent to choosing a sequence of integers $t_1,\, t_2,\ldots,\,t_{k+1}$ in such a way that
$$
t_1+t_2+\cdots+t_{k+1}=A
$$
with $t_1\ge 0,\, t_{k+1}\ge 0$ and $t_j>0,\, 2\le j\le k$.
The number of ways of doing so is less than or equal to
$$
{A+k\choose k}={\lfloor m+r\rfloor \choose k},
$$
thus completing the proof of Lemma \ref{l4a}.

\end{proof}

\subsection{The number of ways of choosing the added elements at each step}

To completely specify an addition chain, we still have to choose which elements will be added at each step.

First observe that for doubling steps in ${\cal A}$ there is no choice to make, since if $j\in{\cal A}$ then necessarily $a_j=2a_{j-1}$. We start by proving the following result.

\begin{lemma}\label{l5}
Let $m$ be a positive integer and $r$ a positive real number such that $r=cm/\log m$ with $c$ a fixed constant, $0<c< \log 2$. Let $1=a_0<\ldots< a_{\lfloor m+r \rfloor}=n$ be an addition chain of length $\lfloor m+r\rfloor$ leading to $n$ with $n\in [2^m, 2^{m+1})$.  Assume that the sets ${\cal A}$, ${\cal B}$, ${\cal C}$ and ${\cal D}$ are fixed.
Denote by $R$ the number of ways of choosing which integers will be added in steps belonging to ${\cal B}$. Then,
$$
R=\exp(O(r))
$$
as $m\to \infty$.
\end{lemma}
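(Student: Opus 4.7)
The plan is to bound, for each $j\in\mathcal{B}$, the number of valid pairs $(a_s,a_i)$ with $0\le i<s<j$ and $a_s+a_i=a_j$, and then combine these per-step bounds using $B=O(r)$ from Lemma~\ref{l1}. The first step is to localize the summands: since $j\in\mathcal{B}$ gives $a_j\ge\gamma a_{j-1}$ while $a_s,a_i\le a_{j-1}$, the identity $a_s+a_i=a_j$ forces both summands into the interval $I_j:=[(\gamma-1)a_{j-1},a_{j-1}]$, whose endpoints have ratio $\gamma$.

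Next I count chain elements in $I_j$. Consecutive chain elements in $I_j$ produced by a step in $\mathcal{A}\cup\mathcal{B}\cup\mathcal{C}$ differ by a factor of at least $1+\delta$, so the number of such ``fast-growth'' elements in $I_j$ is at most $\log\gamma/\log(1+\delta)+O(1)=O(\log m)$. The remaining elements of $I_j$ can only come from small steps, whose total count in the chain is $D=O(r)$ by Lemma~\ref{l2}. Because the chain is fixed, specifying the pair $(a_s,a_i)$ reduces to specifying $a_s$ alone (then $a_i=a_j-a_s$ is forced); hence the number of choices at step $j$ is at most $N_j:=|I_j\cap\{a_0,\ldots,a_{j-1}\}|$, giving
$$
R\le\prod_{j\in\mathcal{B}}N_j.
$$

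To finish I will show $\sum_{j\in\mathcal{B}}\log N_j=O(r)$. For each chain element $a_k$, the set of $\mathcal{B}$-indices $j$ with $a_k\in I_j$ consists of those with $a_{j-1}\in[a_k,\gamma a_k]$; by Lemma~\ref{l3} each $\mathcal{B}$-step is preceded by a step in $\mathcal{C}\cup\mathcal{D}$, so $\mathcal{B}$-indices are well separated in the chain and only boundedly many of them can accumulate in any such narrow range of prior values. Combining this overlap control with the global bounds $B,D=O(r)$ and applying the AM--GM inequality yields $\log R=O(r)$, which is the claim.

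The main obstacle is avoiding the weaker estimate $R\le\exp(O(r\log\log m))$ that comes out of the naive per-step bound $N_j=O(\log m)$. To reach the sharper $\exp(O(r))$ one has to exploit simultaneously the separation structure of $\mathcal{B}$ (Lemma~\ref{l3}) and the global small-step count (Lemma~\ref{l2}) so as to replace the worst-case $N_j=O(\log m)$ by an \emph{average} size of order $1$ over $j\in\mathcal{B}$; this is the delicate accounting step of the proof.
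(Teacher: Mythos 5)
Your setup — localizing both summands of $a_j$ to the ratio-$\gamma$ window $I_j=[(\gamma-1)a_{j-1},a_{j-1}]$ and then controlling $\prod_{j\in\mathcal{B}}N_j$ by AM--GM against a global budget — is the right strategy and close in spirit to the paper's proof, but there is a genuine gap: the key estimate $\sum_{j\in\mathcal{B}}\log N_j=O(r)$ is never proved, only announced in your last paragraph as ``the delicate accounting step.'' The bounds you actually establish cannot deliver it. Your second paragraph gives $N_j\le O(\log m)+(\text{local }\mathcal{D}\text{ count})$, and your overlap observation (each chain element lies in at most one window $I_j$; incidentally this follows from the fact that a $\mathcal{B}$-step itself multiplies by more than $\gamma$, not from Lemma \ref{l3}) only yields $\sum_{j\in\mathcal{B}}N_j\le O(B\log m)+D$, or at best $\le \lfloor m+r\rfloor+1$. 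Feeding either into AM--GM gives $R\le\exp\bigl(B\log(O(\log m)+D/B)\bigr)=\exp(O(r\log\log m))$, which is exactly the weaker bound you say must be avoided. The budget you need is $O(r)$, not $O(m)$ or $O(B\log m)$, and neither the disjointness of the windows nor Lemma \ref{l2} produces it.

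The missing idea is to localize by \emph{index}, not only by value: if $a_j=a_s+a_k$ with $j\in\mathcal{B}$, then $a_k\ge(\gamma-1)a_{j-1}$ forces every index $u$ with $k<u<j$ to lie in $\mathcal{C}\cup\mathcal{D}$ (a step of type $\mathcal{A}$ or $\mathcal{B}$ at such a $u$ would give $a_u\ge\gamma a_{u-1}\ge\gamma a_k\ge\gamma(\gamma-1)a_{j-1}=a_{j-1}$, impossible for $u<j-1$, while $u=j-1$ is excluded by Lemma \ref{l3}). Hence both $s$ and $k$ lie in the maximal run of consecutive $\mathcal{C}\cup\mathcal{D}$-steps ending at $j-1$, of length $h(j)$ say, so the number of choices at step $j$ is at most $(h(j)+1)^2$; since these runs are disjoint for distinct $j\in\mathcal{B}$, $\sum_{j\in\mathcal{B}}h(j)\le C+D=O(r)$ by Lemma \ref{l1} (this, not Lemma \ref{l2}, is the relevant budget), and AM--GM gives $R\le\bigl((B+C+D)/B\bigr)^{2B}=\exp(O(C+D))=\exp(O(r))$. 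Equivalently, in your value-based language: at most one element of $I_j$ is produced by an $\mathcal{A}$- or $\mathcal{B}$-step, so $N_j\le 1+C_j+D_j$ with the local counts $C_j,D_j$ summing over $j\in\mathcal{B}$ to at most $C+D$; your per-window bound $O(\log m)$ for the $\mathcal{C}$-produced elements puts them on the wrong side of the accounting, and repairing this is precisely what completes the proof.
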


\begin{proof}
Suppose that $j\in {\cal B}$ and that
$$
a_j=a_s+a_k,\, j>s\ge k,\, k<j-1.
$$
We have
$$
\gamma a_{j-1}\le a_j=a_s+a_k\le a_{j-1}+a_k,
$$
from which we deduce
$$
a_k\ge (\gamma-1)a_{j-1}.
$$
It follows that all $u$ such that $k<u<j$  are in ${\cal C}\cup{\cal D}$ as assuming the contrary would yield $a_{j-1}>a_k$.  The number of ways of choosing $s$ and $k$ is thus less than or equal to $h(j)^2$ where  for $j\in {\cal B}$,
$$
h(j):=\max_{t\ge 1}\{j-u\in {\cal C}\cup{\cal D} \text{ for all } 1\le u\le t \}.
$$
In other words, $h(j)$ is the maximal integer $t$ such that all the integers in the interval $[j-t, j-1]$ are in the set ${\cal C}\cup{\cal D}$.
We thus have
\begin{equation}\label{r1}
R\le \prod_{j\in {\cal B}}h(j)^2.
\end{equation}
From Lemma \ref{l3}, we have
\begin{equation}\label{r2}
\sum_{j\in {\cal B}}h(j)\le C+D.
\end{equation}
From (\ref{r2}) and the arithmetic-geometric mean inequality, we have
$$
\prod_{j\in {\cal B}} h(j)^{1/B}\le \frac{1}{B}\sum_{j\in {\cal B}}h(j)\le \frac{C+D}{B}.
$$
From (\ref{r1}), we can conclude
$$
R\le\left(\frac{C+D}{B}\right)^{2B}=\exp(O(r)). \qedhere
$$
\end{proof}

We will now prove an upper bound for the number of ways of choosing the integers which will be added in steps belonging to ${\cal C}$. This turns out to be one of the most technical steps in the proof.

\begin{lemma}\label{l6}
Let $m$ be a positive integer and $r$ a positive real number such that $r=cm/\log m$ with $c$ a real constant such that $0<c< \log 2$. Let $1=a_0<a_1<\cdots <a_{\lfloor m+r\rfloor}=n$ with $n\in [2^m, 2^{m+1})$ be an addition chain of length $\lfloor m+r\rfloor$. Assume that the sets ${\cal A}$,  ${\cal B}$, ${\cal C}$ and ${\cal D}$ are fixed.
Denote by $T$ the number of ways of choosing which integers will be added in steps belonging to ${\cal C}$. For any $\varepsilon>0$, we have as $m\to \infty$
$$
T\le (\log m)^{(1+\varepsilon)2C} \exp(O(r)).
$$
\end{lemma}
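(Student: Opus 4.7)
The plan is to bound, for each $j \in \mathcal{C}$, the number of admissible pairs $(s,k)$ with $a_j = a_s + a_k$ yielding a midsize step, and then multiply over all $j \in \mathcal{C}$. Taking $a_s \ge a_k$ without loss of generality, the defining inequalities $(1+\delta)a_{j-1} \le a_j < \gamma a_{j-1}$ immediately yield
\begin{equation*}
a_s \ge \frac{a_j}{2} \ge \frac{(1+\delta)a_{j-1}}{2}
\qquad\text{and}\qquad
a_k = a_j - a_s \ge (1+\delta)a_{j-1} - a_{j-1} = \delta a_{j-1}.
\end{equation*}
Hence $s$ and $k$ lie in the consecutive blocks of prior indices $u < j$ for which $a_u \ge (1+\delta)a_{j-1}/2$ and $a_u \ge \delta a_{j-1}$, respectively. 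Writing $N_s(j)$ and $N_k(j)$ for the sizes of these two blocks, the number of admissible pairs is at most $N_s(j)\, N_k(j)$, and so
\begin{equation*}
T \;\le\; \prod_{j \in \mathcal{C}} N_s(j)\, N_k(j).
\end{equation*}

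The heart of the proof is then to bound $N_s(j)$ and $N_k(j)$. Walking backward from index $j-1$, the cumulative ratio $a_{j-1}/a_u$ is a product of step ratios, each $\ge 1$ and in fact $\ge 1+\delta$ whenever the intermediate step is not in $\mathcal{D}$. Consequently, for any threshold $K > 1$, the number of $u < j$ with $a_{j-1}/a_u \le K$ is at most $\lfloor \log_{1+\delta} K \rfloor + 1$, the extreme case being an uninterrupted block of $\mathcal{D}$-steps. Applying this with $K = 2/(1+\delta)$ gives $N_s(j) = O(\log m)$, and with $K = 1/\delta = \log m$ gives $N_k(j) = O(\log m \cdot \log\log m) = (\log m)^{1+o(1)}$ as $m \to \infty$.

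Multiplying these per-step bounds over the $C$ midsize steps yields
\begin{equation*}
T \;\le\; \bigl(O(\log m) \cdot O(\log m \log\log m)\bigr)^C \;=\; \exp(O(C))\,(\log m)^{2C}\,(\log\log m)^C.
\end{equation*}
Since $\log\log\log m = o(\log\log m)$, the factor $(\log\log m)^C = \exp(C\log\log\log m)$ is at most $(\log m)^{\varepsilon C}$ for $m$ large, while $\exp(O(C))$ is absorbed into $\exp(O(r))$ via $C = O(r)$ from Lemma \ref{l1}. This gives $T \le (\log m)^{(2+\varepsilon)C}\exp(O(r))$, which is $(\log m)^{(1+\varepsilon')2C}\exp(O(r))$ after relabelling $\varepsilon$. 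The main obstacle is the sharp bound on $N_k(j)$: unlike Lemma \ref{l5}, where only large-ratio $\mathcal{B}$-steps were analyzed and the backward reach was controlled by a global sum, here the smaller summand $a_k$ of a midsize step can be reached by traversing a long block of small $\mathcal{D}$-steps, producing the extra $\log\log m$ factor and forcing the exponent from $2C$ up to $(1+\varepsilon)2C$.
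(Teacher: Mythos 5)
The decisive step fails. From ``each step ratio is $\ge 1$, and $\ge 1+\delta$ whenever the step is not in $\mathcal{D}$'' you cannot conclude that the number of indices $u<j$ with $a_{j-1}/a_u\le K$ is at most $\lfloor\log_{1+\delta}K\rfloor+1$. A step in $\mathcal{D}$ only satisfies $a_u<(1+\delta)a_{u-1}$; its ratio can be arbitrarily close to $1$ (e.g.\ $a_u=a_{u-1}+1$ with $a_{u-1}$ huge), so a long run of $\mathcal{D}$-steps contributes essentially nothing to the cumulative ratio while contributing one candidate index each. An uninterrupted block of $L$ small steps immediately preceding a midsize step $j$ gives $N_k(j)\ge L$, and $L$ can be of order $D=O(r)=O(m/\log m)$, which dwarfs your claimed $O(\log m\log\log m)$; your parenthetical ``the extreme case being an uninterrupted block of $\mathcal{D}$-steps'' is exactly backwards, since that is precisely the configuration in which the bound breaks. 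Consequently the per-step bounds on $N_s(j)$ and $N_k(j)$ are false and the product bound $T\le(\log m)^{2C}(\log\log m)^C\exp(O(C))$ does not follow.

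What is missing is a way to control the $\mathcal{D}$-steps lying in the backward windows \emph{on average} over the midsize steps rather than uniformly per step. This is how the paper proceeds: for the $s$-th element of $\mathcal{C}$ it bounds the number of candidate indices by $\frac{-2\log\delta}{\delta}+\eta(s)$, where $\eta(s)$ counts the $\mathcal{D}$-steps inside the window; it then shows $\sum_{s}\eta(s)\le\frac{-2D\log\delta}{\delta}$ (each $\mathcal{D}$-step lies in at most $\frac{-2\log\delta}{\delta}$ windows, because each window contains at most that many non-$\mathcal{D}$ steps and the windows end at distinct elements of $\mathcal{C}$), and finally converts this sum bound into a product bound via the arithmetic--geometric mean inequality, paying only a factor $\left(\frac{C+D}{C}\right)^{2C}\le e^{2D}=\exp(O(r))$. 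Your preliminary estimates for the two summands ($a_s\ge a_j/2$ and $a_k\ge\delta a_{j-1}$) are fine and agree with the paper's; the averaging-plus-AM--GM mechanism is the essential ingredient you would need to add for the argument to go through.
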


\begin{proof}
The intuitive idea behind the proof of Lemma \ref{l6} is that for steps $j$ in ${\cal C}$, the number of choices will be limited since if $a_j=a_k+a_s$, then neither $a_k$ nor $a_s$ can be very small (smaller than $\delta a_j$).

We denote by $\beta_1,\ldots, \beta_{C}$ the elements of the set ${\cal C}$ in increasing order. For $1\le s\le C$, we write
$$
a_{\beta_s}=a_{\beta'_s}+a_{\beta''_s},
$$
where $\beta'_s$ and $\beta''_s$ are not necessarily in ${\cal C}$.

We will now get an upper bound for the number of ways of choosing the values of $\beta'_s$ and $\beta''_s$.

Given that $\beta_s\in {\cal C}$, we have $a_{\beta_s}\ge (1+\delta) a_{\beta_s-1}$. It follows that $a_{\beta'_s},a_{\beta''_s}\le a_{\beta_s-1}\le (1+\delta)^{-1}a_{\beta_s}$.  From this and from $a_{\beta'_s}+a_{\beta''_s}=a_{\beta_s}$, we get
$$
a_{\beta'_s}=a_{\beta_s}-a_{\beta''_s}\ge a_{\beta_s}-\frac{1}{1+\delta}a_{\beta_s}=  a_{\beta_s}-(1-\delta+\delta^2-\cdots)a_{\beta_s}\ge \delta a_{\beta_s}.
$$
Similarilly, we have $a_{\beta''_s}\ge \delta a_{\beta_s}$.

For $1\le s\le C$, let
$$
u(s):=\#\{j: \delta a_{\beta_s}\le a_j\le a_{\beta_s},\, j\in {\cal A}\cup {\cal B}\cup {\cal C}\}.
$$
Then
$$
(1+\delta)^{u(s)}\le \frac{1}{\delta},
$$
which yields
\begin{equation}\label{eee1}
u(s)\le \frac{-\log \delta }{\log(1+\delta)}\le \frac{-2\log \delta}{\delta}
\end{equation}
provided that $\delta$ is small enough (which is the case if $m$ is large enough). Let $t=t(s)$ be the smallest integer such that
$$
\#\{j: t(s)\le j\le \beta_s,\, j\in {\cal A}\cup {\cal B}\cup {\cal C}\}\le \frac{-2\log \delta}{\delta}.
$$
The interest of this is that $\beta'_s$ and $\beta''_s$ must be chosen in the interval $[t(s), \beta_s]$.  Therefore, the number of integers in this interval is at most $\displaystyle \frac{-2\log \delta}{\delta}+\eta(s)$, where
$$
\eta(s):=\#\{j: t(s)\le j \le \beta_s, j\in {\cal D}\}.
$$
Hence, the number of ways of choosing the integers $\beta'_s$ and $\beta''_s$ for $1\le s\le C$ is less than or equal to
$$
\prod_{s=1}^{C}\left(\frac{-2\log\delta}{\delta}+ \eta(s)\right)^2.
$$
We have
$$
\sum_{s=1}^{C} \left(\frac{-2\log \delta}{\delta}+\eta(s)\right)= \frac{-2C\log \delta}{\delta} +\sum_{s=1}^{C}\eta(s).
$$
Because each element in ${\cal D}$ is counted in at most $\frac{-2\log \delta}{\delta}$ sets of the form $\{j: t(s)\le j \le \beta_s, j\in {\cal D}\}$, we have
$$
\sum_{s=1}^C\eta(s)\le \frac{-2D\log \delta}{\delta}.
$$
We can conclude
$$
\sum_{s=1}^{C}\left(\frac{-2\log \delta}{\delta}+\eta(s)\right)\le \frac{-2(C+D)\log \delta}{\delta}.
$$
By the arithmetic-geometric mean inequality, we get
\begin{eqnarray*}
\prod_{s=1}^{C} \left(\frac{-2\log \delta}{\delta}+\eta(s)\right)^{1/C}&\le&\frac{1}{C}\sum_{s=1}^{C}\left(\frac{-2\log \delta}{\delta}+\eta(s)\right)\\
&\le& \frac{-2(C+D)\log \delta}{C\delta}.
\end{eqnarray*}
Raising both sides to the power $2C$, it follows that
\begin{equation}\label{ub1}
\prod_{s=1}^{C} \left(\frac{-2\log \delta}{\delta}+\eta(s)\right)^2\le\left(\frac{-2\log \delta}{\delta} \right)^{2C}\left(\frac{C+D}{C}\right)^{2C}.
\end{equation}
Writing $C=\xi D$, we get
\begin{eqnarray*}
\left(\frac{C+D}{C}\right)^{2C}&=&\left(\frac{\xi+1}{\xi}\right)^{2\xi D}\\
&=&\exp\left(2\xi D\log(1+1/\xi)\right)\\
&\le& \exp(2D).
\end{eqnarray*}
From Lemma \ref{l1}, it follows that
$$
\left(\frac{C+D}{C}\right)^{2C}=\exp(O(r)).
$$
From the definition of $\delta$, we have for any $\varepsilon>0$ and for $m$ large enough,
$$
\left(\frac{-2\log \delta}{\delta}\right)^{2C}=(2\log m\log\log m)^{2C}\le (\log m)^{(1+\varepsilon)2C}.
$$
We can thus conclude
$$
T\le (\log m)^{(1+\varepsilon)2C} \exp(O(r)). \qedhere
$$
\end{proof}

Finally, we provide an upper bound for the number of ways of choosing which integers need to be added in steps belonging to ${\cal D}.$
\begin{lemma}\label{l7}
Let $m$ be a positive integer and let $r$ be a positive real number such that $r=cm/\log m$ with $c$ a fixed constant, $0<c<\log 2$. Let $1=a_0<a_1<\cdots<a_{\lfloor m+r\rfloor}=n$ be an addition chain of length $\lfloor m+r\rfloor $ leading to $n$ with $n\in [2^m, 2^{m+1})$. Assume that the sets ${\cal A}$, ${\cal B}$, ${\cal C}$  and ${\cal D}$ are fixed.
The number of ways of choosing the integers added in steps belonging to ${\cal D}$ is less than or equal to
$$
\exp\left(D\log m+(1+o(1))D\log\log m\right)
$$
as $m\to \infty$.
\end{lemma}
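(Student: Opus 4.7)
The plan is to prove the lemma by a direct per-step counting argument. The key observation is that at each small step, once one of the two summand indices is fixed the other is determined by the sum constraint, so the per-step count reduces to the number of choices for a single index.

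Concretely, I fix $j \in \mathcal{D}$ and write $a_j = a_s + a_k$ with $0 \leq s, k < j$. Since $j$ is not a doubling step (doublings lie in $\mathcal{A}$, not $\mathcal{D}$) we have $s \neq k$, and by symmetry we may assume $s > k$, so that $a_s \geq a_j/2$. Because the chain is strictly increasing, once $s$ is chosen there is at most one admissible $k$: the unique index (if any) for which $a_k = a_j - a_s$ belongs to the chain. Hence the number of ways of specifying the summand indices at step $j$ is at most the number of candidate values of $s$, namely $j \leq \lfloor m+r\rfloor$.

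Multiplying the per-step bound over the $D$ steps of $\mathcal{D}$ gives at most $(\lfloor m+r\rfloor)^D$ configurations. Taking logarithms and using the hypothesis $r = cm/\log m = o(m)$, I obtain
$$
D \log \lfloor m+r\rfloor = D \log m + O\!\left(\frac{D}{\log m}\right).
$$
Since $1/\log m = o(\log \log m)$ as $m \to \infty$, the error term is $o(D \log \log m)$, and the claimed inequality $\exp\bigl(D \log m + (1+o(1)) D \log \log m\bigr)$ follows.

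There is no real obstacle here: the lemma's stated bound has considerable slack, since the trivial per-step count of at most $\lfloor m+r\rfloor$ possibilities already suffices. A sharper estimate could be obtained by observing that $a_s$ must lie in the ratio-$2$ window $[a_j/2, a_{j-1}]$ and bounding the number of chain elements in such a window via the growth-rate arguments already developed in the preceding lemmas, but this refinement is unnecessary for the statement at hand.
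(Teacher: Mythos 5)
There is a genuine gap: your reduction from pairs of indices to a single index is circular. When you count the ways of choosing the summands at a step $j\in\mathcal{D}$, the value $a_j$ is not given in advance --- it is created by the choice of the pair $(s,k)$, and different pairs produce different (equally admissible) chains. So after fixing the larger index $s$, the smaller index $k$ is \emph{not} determined: every $k$ with $a_{j-1}-a_s< a_k< (1+\delta)a_{j-1}-a_s$ is allowed, and this additive window has length $\delta a_{j-1}$, which can contain many chain elements (for instance when an earlier run of small steps has produced several chain values within a factor $1+\delta$ of one another near $a_{j-1}/2$). Each such $k$ yields a different chain, so the honest per-step count is of order $\lfloor m+r\rfloor^2$, not $\lfloor m+r\rfloor$, and multiplying over the $D$ steps gives only $\exp\left(2D\log m+O(D)\right)$, which is too weak. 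Your closing remark that the lemma has ``considerable slack'' and that a per-step count of $\lfloor m+r\rfloor$ ``already suffices'' therefore mischaracterizes the situation: the entire content of the lemma is the saving over $\exp(2D\log m)$. (A minor additional slip: $s\neq k$ does not follow from $j\notin\mathcal{A}$, since $\mathcal{A}$ only contains doublings of the \emph{immediately preceding} term; a step $a_j=2a_k$ with $k<j-1$ can lie in $\mathcal{D}$. This is harmless, unlike the main issue.)

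The paper (following Erd\H{o}s) obtains the saving globally rather than per step: each step of $\mathcal{D}$ uses a pair $(s,k)$ from the box $[1,m+r]\times[1,m+r]$, distinct steps must use distinct pairs (equal pairs would force equal chain values), so the choices form a $D$-element subset of a set of $\lfloor m+r\rfloor^2$ elements, giving the bound $\binom{\lfloor m+r\rfloor^2}{D}\le \left(e(m+r)^2/D\right)^D$. The factor $1/D!$ hidden in the binomial coefficient is exactly what converts $2D\log m$ into $D\log m+(1+o(1))D\log\log m$ when $D$ is of size comparable to $r=cm/\log m$, which is the regime that matters in the subsequent lemmas. To repair your argument you would need either this set-counting device or a genuine proof that, for fixed $s$, the number of admissible $k$ is bounded --- and the latter is false in general.
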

\begin{proof}
We follow the approach used by Erd\H{o}s in  \cite{erdos}.  Assume that $j\in {\cal D}$ with $a_j=a_s+a_k$. The pair $(s,k)$ must be chosen in the box $[1,m+r]\times [1,m+r]$ which contains $\lfloor m+r\rfloor^2$ elements. Thus, the number of ways of choosing which integers are added in steps of ${\cal D}$ is less than or equal to
$$
{\lfloor m+r\rfloor^2\choose D}.
$$
From this, we can prove Lemma \ref{l7} by observing
$$
{\lfloor m+r\rfloor^2\choose D}\le \left(\frac{e(m+r)^2}{D}\right)^D\le\exp\left(D\log m+(1+o(1))D\log\log m\right). \qedhere
$$
\end{proof}

\subsection{Some intermediate results}

We now have the tools to obtain some intermediate results that will take us closer to the proof of the upper bound.

\begin{definition}
 Let $m$ be a positive integer and $r$ a positive real number. We denote by
$$
 N=N(A,B,C,D,m,r)
 $$
the number of distinct addition chains of length $\lfloor m+r\rfloor$ leading to an integer $n \in [2^m, 2^{m+1})$ with  $A,B,C,D$ fixed.
\end{definition}
The results of the previous subsections will yield an upper bound on $N$.

\begin{lemma}\label{l8} Let $m$ a positive integer and let $r$ be a positive real number with $r=cm/\log m$ where $c$ is a fixed constant with $0<c< \log 2$.  We have
$$
N(A,B,C,D,m,r)\le \exp\left(D\log m+(1+o(1))(2D+3C)\log\log m\right)
$$
as $m\to \infty$.
\end{lemma}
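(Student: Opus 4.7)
The plan is to multiply the four bounds from Lemmas \ref{l4}, \ref{l5}, \ref{l6}, \ref{l7}. A chain with fixed $(A,B,C,D)$ is determined by (i) the positions of the four type classes (counted by $S$) and (ii) the summands chosen at each non-doubling step, which for $\mathcal{B}$, $\mathcal{C}$, $\mathcal{D}$ are controlled by $R$, $T$, and the count of Lemma \ref{l7}, respectively; doubling steps are deterministic. Hence, for any $\varepsilon>0$,
$$
N \le S\cdot R\cdot T\cdot \exp\bigl(D\log m+(1+o(1))D\log\log m\bigr),
$$
which, after substituting the cited bounds, becomes
$$
N\le {\lfloor m+r\rfloor\choose C+D}\exp\bigl(D\log m+(1+o(1))D\log\log m+2(1+\varepsilon)C\log\log m+O(r)\bigr).
$$

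It remains to control the binomial factor. By Lemma \ref{l1}, $C+D\le r/(1-\log_2\gamma)=O(m/\log m)$, so in the principal regime $C+D\asymp r$ one has $(m+r)/(C+D)=O(\log m)$. Using the standard estimate $\binom{n}{k}\le (en/k)^k$,
$$
\log{\lfloor m+r\rfloor\choose C+D}\le (C+D)\Bigl(1+\log\tfrac{m+r}{C+D}\Bigr)\le (1+o(1))(C+D)\log\log m.
$$
In the same regime, $O(r)=O(m/\log m)=o\bigl((C+D)\log\log m\bigr)$ is absorbed into the $(1+o(1))$ slack, and collecting all terms yields
$$
\log N\le D\log m+\bigl((1+o(1))D+(2+o(1))C+(1+o(1))(C+D)\bigr)\log\log m,
$$
which, on choosing $\varepsilon=\varepsilon(m)\to 0$ slowly, matches the announced bound $D\log m+(1+o(1))(2D+3C)\log\log m$.

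The main technical point is the uniform handling of the binomial factor. The calculation above is tight precisely when $C+D\asymp r$; when $C+D$ is substantially smaller, Lemma \ref{l3} gives $B\le C+D$ so that $B+C+D$ is also small, $A$ occupies almost all of the chain, and a direct count shows $N$ is far below the stated bound. The combinatorial multiplication itself is routine, so the real content lies in recognising that the $\binom{m+r}{C+D}$ factor, together with the $O(r)$ remainder from the other three lemmas, contributes exactly the extra $(1+o(1))(C+D)\log\log m$ needed to upgrade the $(D+2C)\log\log m$ produced by Lemmas \ref{l6} and \ref{l7} to the final $(2D+3C)\log\log m$.
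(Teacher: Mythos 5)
Your overall route is the same as the paper's: the paper proves Lemma \ref{l8} in one line by multiplying the bounds of Lemmas \ref{l4}, \ref{l5}, \ref{l6} and \ref{l7}, and your explicit absorption of the factor $\binom{\lfloor m+r\rfloor}{C+D}\exp(O(r))$ into $(1+o(1))(C+D)\log\log m$ when $C+D\asymp r$ is exactly the bookkeeping the paper leaves implicit (the replacement of the fixed $\varepsilon$ of Lemma \ref{l6} by $(2+o(1))C\log\log m$ is also harmless, since its proof in fact gives $(2\log m\log\log m)^{2C}=\exp\bigl((2+o(1))C\log\log m\bigr)$). In that regime your computation is correct.

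The gap is in your second case, $C+D$ ``substantially smaller'' than $r$. A direct count of the kind you sketch cannot rescue the bound there: the positional factor alone is $\binom{\lfloor m+r\rfloor}{C+D}\approx m^{C+D}$, which for $1\le C+D\ll r/\log\log m$ already dwarfs the claimed $\exp\bigl((1+o(1))(2D+3C)\log\log m\bigr)$, so if any chains with such parameters existed the lemma would simply fail for them. What saves the statement is that this regime is empty, and to see this you must use the constraint $n<2^{m+1}$, which your argument never invokes. Since steps in $\mathcal{A}$, $\mathcal{B}$, $\mathcal{C}$ have ratios at least $2$, $\gamma$, $1+\delta$ respectively and every step has ratio $>1$, one has $2^{m+1}>n\ge 2^{A}\gamma^{B}(1+\delta)^{C}$, hence $A+B\log_2\gamma+C\log_2(1+\delta)<m+1$; combined with $A+B+C+D=\lfloor m+r\rfloor>m+r-1$ this yields
$$
B\bigl(1-\log_2\gamma\bigr)+C\bigl(1-\log_2(1+\delta)\bigr)+D>r-2,
$$
and with $B\le C+D$ (Lemma \ref{l3}) it follows that $(2-\log_2\gamma)(C+D)>r-2$, i.e. $C+D\gg r$ whenever $N\ge 1$. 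With this observation your ``principal regime'' is the only nonvacuous one and your first computation applies uniformly; without it, the case distinction as you wrote it does not close the proof.
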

\begin{proof}
The proof of Lemma \ref{l8} follows directly from Lemmas \ref{l4}, \ref{l5}, \ref{l6} and  \ref{l7}.
\end{proof}

From Lemmas \ref{l2} and \ref{l8}, we have
\begin{eqnarray}
N
&\le&\exp\left(\frac{r -C(1-\log_2\gamma)}{1-\log_2(1+\delta)}\log m +(1+o(1))\left(2\frac{r-C(1-\log_2\gamma)}{1-\log_2(1+\delta)}+3C\right)\log\log m\right) \nonumber\\
&\le& \exp\left(\frac{r -C(1-\log_2\gamma)}{1-\log_2(1+\delta)}\log m +(1+o(1))\left(2r+3C\right)\log\log m\right)\label{eqnew1}.
\end{eqnarray}
From (\ref{eqnew1}), we can obtain the following upper bound which, though weaker than Theorem \ref{t1}, will be useful.
\begin{lemma}\label{l9}
Let $m$ be a positive integer and $0<c< \log 2$ a real number.Then, for any $\varepsilon>0$ and for $m$ large enough,
$$
F\left(m,\frac{cm}{\log m}\right)\le \exp\left(cm+\frac{(2+\varepsilon)cm\log\log m}{\log m}\right).
$$
\end{lemma}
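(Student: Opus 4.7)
The plan is to use the already-derived inequality (\ref{eqnew1}) to bound $F$ by summing the number of addition chains over all admissible quadruples $(A,B,C,D)$, then optimizing the resulting bound over the only remaining free parameter, namely $C$. First I would note that
$$
F\left(m,\frac{cm}{\log m}\right)\le \sum_{(A,B,C,D)} N(A,B,C,D,m,r),
$$
where the sum is over tuples satisfying $A+B+C+D=\lfloor m+r\rfloor$. Since each entry lies in $[0,\lfloor m+r\rfloor]$ and one entry is determined by the other three, there are only $O(m^3)$ such tuples, contributing a harmless factor $\exp(O(\log m))$.

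The next step is to view the exponent appearing in (\ref{eqnew1}) as a function of $C$ alone (the $A$, $B$, $D$ dependence having been eliminated via Lemma \ref{l2}). It is linear in $C$ with slope
$$
-\frac{1-\log_2\gamma}{1-\log_2(1+\delta)}\log m + 3(1+o(1))\log\log m.
$$
Since $1-\log_2\gamma$ is a positive absolute constant, $\log_2(1+\delta)\to 0$ as $m\to\infty$, and $\log\log m=o(\log m)$, this slope is negative for $m$ sufficiently large. Consequently the maximum over admissible $C$ is attained at $C=0$, yielding
$$
\max N \le \exp\!\left(\frac{r}{1-\log_2(1+\delta)}\log m + 2(1+o(1))\,r\log\log m\right).
$$

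Finally, I would expand $\log_2(1+\delta)=\delta/\log 2 + O(\delta^2)$ with $\delta=1/\log m$, which gives
$$
\frac{1}{1-\log_2(1+\delta)}=1+O\!\left(\frac{1}{\log m}\right).
$$
The first term in the exponent therefore equals $r\log m + O(r) = cm + O(cm/\log m)$, while the second term equals $2(1+o(1))cm\log\log m/\log m$. Combining these with the earlier $\exp(O(\log m))$ factor from the tuple count produces the stated bound, since both $O(\log m)$ and $O(cm/\log m)$ are absorbed into the correction $\varepsilon cm\log\log m/\log m$ for any fixed $\varepsilon>0$ and $m$ large enough. The only genuine subtlety is verifying that $C=0$ is truly optimal, which is precisely the observation that the $\log m$ gain from shrinking $C$ in the first term of (\ref{eqnew1}) overwhelms the $\log\log m$ loss in the second.
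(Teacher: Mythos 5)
Your argument is correct and follows essentially the same route as the paper: bound $F$ by the number of admissible tuples $(A,B,C,D)$ times the maximum of $N$, and deduce from (\ref{eqnew1}) that this maximum is $\exp\left(r\log m+(1+o(1))2r\log\log m\right)$, which with $r=cm/\log m$ gives the claim. The paper leaves implicit the two points you spell out (that the exponent in (\ref{eqnew1}) is decreasing in $C$ for large $m$, and that $1/(1-\log_2(1+\delta))=1+O(1/\log m)$ so the first term is $r\log m+O(r)$), so your write-up is just a more detailed version of the same proof.
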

\begin{proof}
Assume that $r=cm/\log m$ with $0<c<\log 2$. From inequality (\ref{eqnew1}), we have as $m\to \infty$,
$$
N(A,B,C,D,m,r)\le\exp\left(r\log m+(1+o(1))2r\log\log m \right).
$$
From this, we conclude that
\begin{eqnarray*}
F(m,r)&\le& \sum_{A,B,C,D\le m+r}N(A,B,C,D,m,r)\\
&\le& (m+r)^4\max_{A,B,C,D\le m+r}N(A,B,C,D,m,r)\\
&\le&\exp\left(r\log m+(1+o(1))2r\log\log m \right).
\end{eqnarray*}
Using $r=cm/\log m$ completes the proof of Lemma \ref{l9}.
\end{proof}
It is interesting to note that from Lemma \ref{l9} and inequality (\ref{eqnew1}), we can conclude that addition chains with a large value of $C$, for instance chains such that
\begin{equation}\label{Cub}
C<\frac{7r\log \log m}{(1-\log_2\gamma)\log m},
\end{equation}
have a negligible contribution to $F(m,r)$.

\subsection{Addition blocks}
We divide the addition steps (that is, steps in ${\cal B}\cup{\cal C}\cup{\cal D}$) in blocks of consecutive addition steps.  Say we have $K$ blocks of lengths
$$
L_1,L_2,\,  \ldots,L_K
$$
with
$$
L_1+L_2+\cdots+L_k=B+C+D.
$$
Suppose that
$$
a_j,\ldots, a_{j+L-1}
$$
is a sequence of $L>1$ consecutive addition steps. That is, suppose that $s\in {\cal B}\cup{\cal C}\cup{\cal D}$ for $j\le s\le j+L-1$ and that $a_{j-1}\in{\cal A}$, $a_{j+L}\in{\cal A}$.

 Then, after $a_{j-1}=2a_{j-2}$, the sequence begins with
$$
a_j=a_{j-1}+a_t,\, t<j-1.
$$
Indeed, if $a_j=a_s+a_t$ with $s,t<j-1$, we would have $a_j \le a_{j-2}+a_{j-2}=a_{j-1}$, a contradiction. For $s$ such that $j\le s\le j+L-1$, if we write
$$
a_s=a_v+a_t,\, v\ge t,
$$
then $v\ge j-1$ or else $a_s\le a_{j-1}$. Indeed, assuming the contrary yields
$$
a_s=a_v+a_t\le a_{j-2}+a_{j-2}=a_{j-1},
$$
a contradiction.

We will use this observation and the block structure to refine the upper bound on the number of ways of choosing the integers that are in steps belonging to ${\cal D}$.
\begin{lemma}\label{l10}
Consider a block of $L$ consecutive additive steps with $L'$ steps in ${\cal D}$ and $L''$ steps in ${\cal B}\cup {\cal C}$ in an addition chain of length $\lfloor m+r\rfloor$. Then, the number of ways of choosing which integers are added in steps of type ${\cal D}$ of this block is less than or equal to
$$
{(L'+L'')\lfloor m+r\rfloor \choose L'}.
$$
\end{lemma}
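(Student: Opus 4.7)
The plan is to construct an injection from the set of pair-assignments at the steps of type ${\cal D}$ inside the block into the collection of $L'$-element subsets of a pool of at most $(L'+L'')\lfloor m+r\rfloor$ candidate pairs $(v,t)$.

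First, I would pin down the pool. Combining the block analysis carried out just before the statement of the lemma, namely that within a block of $L$ consecutive additive steps starting at index $j$ one has $v \ge j-1$ whenever $a_s = a_v + a_t$ with $v \ge t$, with the elementary constraints $v \le s-1 \le j+L-2$ and $t \le v \le \lfloor m+r\rfloor$, one sees that every pair $(v,t)$ that can be used at a ${\cal D}$-step of the block lies in a set of at most $L \cdot \lfloor m+r\rfloor = (L'+L'')\lfloor m+r\rfloor$ ordered pairs.

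Next, I would argue that an entire pair-assignment, specifying for each of the $L'$ steps of type ${\cal D}$ in the block the pair being added, is recoverable from its image in the pool. The key observation is that strict monotonicity of the chain forces distinct positions to be assigned distinct pairs: if positions $s_1 \ne s_2$ were assigned the same pair $(v,t)$, then $a_{s_1} = a_v + a_t = a_{s_2}$, a contradiction. The same reasoning yields injectivity of the map sending an assignment $f$ to its image $\{f(s) : s \in {\cal D} \cap [j, j+L-1]\}$: if two assignments $f_1, f_2$ share the same image, then for any pair $(v,t)$ in that image the positions $f_1^{-1}(v,t)$ and $f_2^{-1}(v,t)$ both have $a$-value $a_v + a_t$ and must therefore coincide. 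This injectivity is the crux of the argument, as it lets us count unordered $L'$-subsets of the pool rather than ordered $L'$-tuples, thereby avoiding a stray factor of $L'!$.

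With this injection in place, the number of admissible pair-assignments is at most the number of $L'$-element subsets of the pool, and hence at most $\binom{(L'+L'')\lfloor m+r\rfloor}{L'}$, which is the desired bound. I expect the main obstacle in writing the proof out cleanly to be articulating the injectivity step unambiguously, since it rests on the (somewhat implicit) convention that the positions of the ${\cal D}$-steps are held fixed and only the summation pairs $(v,t)$ are being chosen.
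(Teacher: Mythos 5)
Your proposal is correct and follows essentially the same route as the paper: the paper also restricts each pair $(t,v)$ at a ${\cal D}$-step of the block to the set $[1,m+r]\times[j-1,j+L-2]$ of cardinality $L\lfloor m+r\rfloor=(L'+L'')\lfloor m+r\rfloor$, notes that the $L'$ pairs must be distinct (by strict monotonicity of the chain), and concludes with the binomial coefficient $\binom{(L'+L'')\lfloor m+r\rfloor}{L'}$. Your explicit injectivity discussion (why counting unordered $L'$-subsets rather than assignments loses no generality) is a point the paper leaves implicit in the phrase ``we must choose $L'$ distinct pairs''; just be aware that, as written, it tacitly assumes the values $a_v,a_t$ agree across the two chains being compared, which is cleanest to justify by looking at the first position where the two assignments disagree.
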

\begin{proof}
Suppose that the addition block ranges from $a_j$ to $a_{j+L-1}$, that $j\le s\le j+L-1$ and that $a_s=a_v+a_t$ with $t\le v$.  The pair $(t,v)$ must be chosen in the set $[1, m+r]\times [j-1,j+L-2]$. Notice that we use the fact that $v\in [j-1,j+L-2]$ but don't use the condition $t\le v$ as it would not lead to significant improvements. The cardinality of the set $[1, m+r]\times [j-1,j+L-2]$ is $L\lfloor m+r\rfloor=(L'+L'')\lfloor m+r\rfloor$.  Since we must choose $L'$ distinct pairs $(t,v)$, the number of ways of making this choice is less than or equal to
$$
{(L'+L'')\lfloor m+r\rfloor \choose L'}. \qedhere
$$
\end{proof}
Using Lemma \ref{l10}, we can improve the bound obtained in Lemma \ref{l7}.
\begin{lemma}\label{l11}
Let $m$ be a positive integer and $r$ a positive real number such that $r=cm/\log m$ with $c$ a fixed constant, $0<c< \log 2$. Consider $1=a_0<a_1<\cdots<a_{\lfloor m+r\rfloor}$ an addition chain of length $\lfloor m+r\rfloor$ leading to $n$ with $n\in [2^m,  2^{m+1})$.  Assume that the sets ${\cal A}$, ${\cal B}$, ${\cal C}$ and ${\cal D}$ are fixed.  The number of ways of choosing which integers are added in steps belonging to ${\cal D}$ is less than or equal to
$$
\exp(D\log m+O(m/\log m)).
$$
\end{lemma}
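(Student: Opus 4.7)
The plan is to apply Lemma \ref{l10} on each of the $K$ addition blocks separately and then multiply the bounds, exploiting the fact that the choices of summands inside distinct blocks are independent once the sets ${\cal A},{\cal B},{\cal C},{\cal D}$ are fixed. Writing $L_i=L'_i+L''_i$ for block $i$, with $L'_i$ the number of steps in ${\cal D}$ and $L''_i$ the number of steps in ${\cal B}\cup{\cal C}$, Lemma \ref{l10} yields
$$
(\text{number of ways})\le \prod_{i=1}^{K}\binom{L_i\lfloor m+r\rfloor}{L'_i},
$$
where blocks with $L'_i=0$ simply contribute a factor $1$.

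Next I would invoke the standard bound $\binom{N}{k}\le (eN/k)^k$ for $k\ge 1$ on every nontrivial factor, giving
$$
\prod_{i:L'_i\ge 1}\left(\frac{eL_i\lfloor m+r\rfloor}{L'_i}\right)^{L'_i}=\exp\!\left(\sum_{i:L'_i\ge 1}L'_i\log\frac{eL_i\lfloor m+r\rfloor}{L'_i}\right).
$$
Splitting the logarithm as $\log(e\lfloor m+r\rfloor)+\log(L_i/L'_i)$ and using $\sum_i L'_i=D$, the first piece contributes exactly $D\log\lfloor m+r\rfloor+D=D\log m+O(D)$.

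The decisive step, and the one I expect to be the main obstacle, is to show that the second piece is absorbed into the error term. Here I would use the elementary inequality $\log x\le x-1$ to obtain
$$
\sum_{i:L'_i\ge 1}L'_i\log\frac{L_i}{L'_i}\le \sum_{i:L'_i\ge 1}(L_i-L'_i)\le (B+C+D)-D=B+C.
$$
By Lemma \ref{l1} we have $B+C\le B+C+D=O(r)=O(m/\log m)$, and likewise $D=O(m/\log m)$, so $O(D)$ fits into the same error. Combining these estimates gives the stated bound
$$
\exp\!\bigl(D\log m+O(m/\log m)\bigr),
$$
which is the desired improvement of Lemma \ref{l7}: the blockwise factorisation essentially converts the previous $D\log\log m$ term into a quantity controlled by $B+C$.
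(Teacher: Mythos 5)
Your proposal is correct and follows essentially the same route as the paper: apply Lemma \ref{l10} blockwise, bound each binomial coefficient by $(eN/k)^k$, and absorb the leftover factor via $L'_i\log(L_i/L'_i)\le L''_i$ (the paper writes this as $(1+L''_i/L'_i)^{L'_i}\le e^{L''_i}$), then use $B+C+D=O(r)=O(m/\log m)$ from Lemma \ref{l1}. No substantive difference.
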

\begin{proof} Assume that the addition steps are divided in $k$ blocks of lengths $L_1,\ldots, L_k$.  Assume that the number of steps in ${\cal D}$ in the block $s$, with $1\le s\le k$, is $L_s'$ and that the number of steps in ${\cal B}\cup {\cal C}$ in this block is $L_s''$. From Lemma \ref{l10}, the number of ways of choosing the integers added in steps belonging to ${\cal D}$ is less than or equal to
\begin{eqnarray*}
\prod_{1\le s\le k} {(L_s'+L_s'')\lfloor m+r\rfloor )\choose L_s'}&\le& \prod_{1\le s\le k} \left(\frac{e(L_s'+L_s'')(m+r)}{L_s'}\right)^{L_s'}\\
&=&(e(m+r))^D\prod_{1\le s\le k}\left(1+\frac{L_s''}{L_s'}\right)^{L_s'}\\
&\le& (e(m+r))^D\exp\left(\sum_{s=1}^k L_s''\right)\\
&=&(e(m+r))^De^{B+C}\\
&=&\exp(D\log m+O(m/\log m)).
\end{eqnarray*}
For the last equality, we used the fact that $B+C=O(r)$ and $r=O(m/\log m)$.

\end{proof}

Using Lemma \ref{l11}, we can improve Lemma \ref{l9} and obtain the following result.
\begin{lemma}\label{l12}
Let $m$ be a positive integer and $c,\, 0<c< \log 2$, be a real constant.  For any $\varepsilon>0$, we have for $m$ large enough
$$
F\left(m,\frac{cm}{\log m}\right)\le \exp\left(cm+\frac{(1+\varepsilon)cm\log\log m}{\log m}\right).
$$
\end{lemma}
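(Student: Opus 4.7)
The plan is to re-run the proof of Lemma \ref{l9}, but with Lemma \ref{l11} replacing Lemma \ref{l7}. In the bound of Lemma \ref{l8}, the two copies of $D\log\log m$ have distinct origins: one comes from the factor $\exp(D\log m+(1+o(1))D\log\log m)$ produced by Lemma \ref{l7}, and the other from the multinomial factor ${\lfloor m+r\rfloor \choose C+D}$ used to bound $S$ in Lemma \ref{l4}. Lemma \ref{l11} replaces the first of these by $\exp(D\log m+O(m/\log m))$, so exactly one copy of $D\log\log m$ is removed. Consequently the coefficient of $r\log\log m$ in the final exponent should drop from $2$ (as in Lemma \ref{l9}) to $1$, which is precisely the improvement demanded by Lemma \ref{l12}.

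Concretely, I would first combine the four ingredients. For a fixed quadruple $(A,B,C,D)$ and fixed positions of $\mathcal{A},\mathcal{B},\mathcal{C},\mathcal{D}$, Lemma \ref{l4} gives $S\le {\lfloor m+r\rfloor \choose C+D}\exp(O(r))$, Lemma \ref{l5} gives $R=\exp(O(r))$, Lemma \ref{l6} gives $T\le(\log m)^{2(1+\varepsilon)C}\exp(O(r))$, and Lemma \ref{l11} gives $\exp(D\log m+O(m/\log m))$ for the choice of integers added at the $\mathcal{D}$-steps. Using ${\lfloor m+r\rfloor \choose C+D}\le (e(m+r)/(C+D))^{C+D}$ and multiplying yields
$$
N\le\exp\left((C+D)\log\frac{e(m+r)}{C+D}+2(1+\varepsilon)C\log\log m+D\log m+O(m/\log m)\right).
$$

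Next, I would maximize the right-hand exponent in $(C,D)$ subject to the linear constraint from Lemma \ref{l2}, namely $(1-\log_2(1+\delta))D+(1-\log_2\gamma)C\le r$. The expected extremal configuration is $C=0$ with $D=r/(1-\log_2(1+\delta))$, because swapping a unit of $D$ for a unit of $C$ loses about $(1-\log_2\gamma)\log m$ in the dominant $D\log m$ term while gaining only $O(\log\log m)$ from the remaining terms; this is the same scale separation that underlies the truncation (\ref{Cub}) used just after Lemma \ref{l9}. At the extremal point, a short computation using $\delta=1/\log m$, $r=cm/\log m$ and $(m+r)/D\asymp(\log m)/c$ gives $D\log m=r\log m+O(r)$, $(C+D)\log((m+r)/(C+D))=r\log\log m+O(r)$, and the $T$ term vanishes.

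Finally, summing over the at most $(m+r)^4=\exp(O(\log m))$ quadruples $(A,B,C,D)$ contributes only a negligible factor in the exponent, so that
$$
F\left(m,\frac{cm}{\log m}\right)\le \exp\left(r\log m+r\log\log m+O(m/\log m)\right)=\exp\left(cm+(1+o(1))\frac{cm\log\log m}{\log m}\right),
$$
which for $m$ large enough is bounded by the quantity in Lemma \ref{l12}. The only step I expect to require genuine care, rather than mere bookkeeping, is the verification that $C=0$ is indeed extremal: this is a brief derivative argument in $(C,D)$ along the boundary of Lemma \ref{l2}, relying on the separation of scales $(1-\log_2\gamma)\log m\gg\log\log m$, and entirely parallel to the analogous reduction used in (\ref{Cub}) and the paragraph following Lemma \ref{l9}.
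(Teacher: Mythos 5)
Your proposal is correct and follows essentially the same route as the paper: combine Lemma \ref{l4} with Lemmas \ref{l5}, \ref{l6} and \ref{l11}, sum over the $O(m^4)$ quadruples, and then optimize using the constraint of Lemma \ref{l2}. The only difference is cosmetic: the paper disposes of the $(\log m)^{2(1+\varepsilon)C}$ factor by invoking the negligibility of large-$C$ chains (the discussion around (\ref{Cub})) and leaves the final step to ``(\ref{nv1}) and Lemma \ref{l2},'' whereas you carry out that last optimization explicitly, verifying that $C=0$ is extremal along the boundary of Lemma \ref{l2}, which is a valid (and slightly more self-contained) way to finish.
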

While this result is still weaker than Theorem \ref{t1},  we are getting closer.

\begin{proof}
The proof of Lemma \ref{l12} follows rather directly from our previous results. The number of ways of choosing $A$, $B$, $C$ and $D$ is $O(m^4)$.   By Lemma $\ref{l4}$, the number of ways of choosing the sets ${\cal A}$, ${\cal B}$,  ${\cal C}$ and ${\cal D}$ is  less than or equal to
$$
{\lfloor m+r\rfloor \choose C+D}\exp(O(r)).
$$
By Lemmas \ref{l5}  and  \ref{l6}, for any $\varepsilon>0$ and for $m$ large enough, the number of ways of choosing the integers added in steps belonging to  ${\cal B}\cup {\cal C}$ is less than or equal to
$$
(\log m)^{(1+\varepsilon)2C}\exp\left(O(r)\right).
$$
Using Lemma \ref{l11} and assuming $\displaystyle r=\frac{cm}{\log m}$, we obtain
\begin{eqnarray}
F\left(m,\frac{cm}{\log m}\right)&\le& {\lfloor m+r\rfloor\choose C+D}\exp(D\log m)\exp(O(r))\nonumber\\
&=&{\lfloor m+r\rfloor \choose C+D}\exp(D\log m)\exp\left(O\left(\frac{m}{\log m}\right)\right)\label{nv1}.
\end{eqnarray}
Finally, Lemma \ref{l12} follows from (\ref{nv1}) and Lemma \ref{l2}.
\end{proof}

We end this subsection by observing that from the above results, the number of addition chains of length $\lfloor m+r\rfloor$ leading to $n$ with $n\in [2^{m}, 2^{m+1})$ and  with $k$ addition blocks is less than or equal to
$$
{\lfloor m+r\rfloor \choose k}\exp\left(D\log m+o\left(\frac{m\log\log m}{\log m}\right)\right).
$$
This implies that if $k=o(m/\log m)$, the proof of Theorem \ref{t1} is complete.  In the rest of the demonstration, we will thus assume that there is a positive constant $c_1$ such that $k>c_1 m/\log m.$

\subsection{Minimal addition chains}
As discussed in the Introduction, several addition chains of the same length can lead to the same integer. Thus far, we did not distinguish between such chains.  To make further progress, we will need a few additional definitions.

\begin{definition}
Let $m,n$ be positive integers and let $r$ be a positive real number.  Let
$$
1=a_0<a_1<\cdots<a_{\lfloor m+r\rfloor}=n
$$
and
$$
1=b_0<b_1<\cdots<b_{\lfloor m+r\rfloor}=n
$$
be two addition chains of the same length leading to the same integer.

Assume that $b_j\le a_j$ for all $j$.  Assume further that there exists an $s$, $1\le s\le m+r$, such that $b_s<a_s$.

In this case, we say that the chain $a_0,\ldots, a_{\lfloor m+r\rfloor}$ {\bf dominates} the addition chain $b_0,\ldots, b_{\lfloor m+r\rfloor}$.
\end{definition}

If an addition chain dominates another addition chain of the same length and leads to the same integer, we say that the first chain is {\bf not valid}.  Our goal is to count only valid addition chains.

For example, consider the two addition chains $a_0=1, a_1=2,a_3=4,a_4=5$ and $b_0=1, b_1=2,b_3=3,b_4=5$. The chain $a_j,\, 0\le j\le 4$, dominates the chain $b_j,\, 0\le j\le 4$, and thus is not valid.

We also define the notion of a {\it marked} element of an addition chain.
\begin{definition}
Let $1=a_0<a_1<\cdots<a_{\lfloor m+r\rfloor }=n$ be an addition chain.  We say that the step $s$ is {\bf marked} if there exists a $j>s+1$ such that $a_j=a_s+a_t$ for some $t$ or else the addition chain cannot be a valid chain of minimal length.
\end{definition}
We provide a few examples to illustrate this notion.
Suppose that an addition chain begins with
$$
a_0=1, a_1=2, a_2=4, a_3=5, a_4=8, \ldots
$$
The step 3 ($a_3=5$) is {\bf marked} since if it is not used in a later step of the chain, it can simply be removed from the addition chain which would yield a chain of smaller length leading to the same integer.

We now give another example that will be more relevant in the rest of the proof.  Suppose that an addition chain begins with
$$
a_0=1, a_1=2, a_2=4, a_3=8, a_4=10, a_5=20,\ldots
$$
then the step 3 ($a_3=8$) is marked since if it is not used in a future step of the chain, the chain could be replaced by
$$
b_0=1, b_1=2, b_2=4, b_3=5, b_4=10, b_5=20,\, b_j=a_j, \, j>5.
$$
Since the chain $a_j$ dominates the chain $b_j$, it is not valid.
The previous example can be generalized.  Assume that an addition chain contains the sequence
$$
a_0=1,\ldots, a_s=2a_{s-1},\ldots, a_j=2a_{j-1},\, a_{j+1}=a_j+a_s,\ldots,
$$
then the step $a_j$ is marked. Indeed if $a_j$ is not used  in a step following $a_{j+1}$, we can replace the addition chain $a_0, a_1,\ldots, a_{\lfloor m+r\rfloor}$, by the chain $b_0, b_1,\ldots, b_{\lfloor m+r\rfloor}$, defined as $b_t=a_t,\, 0\le t\le j-1$,  $b_j=b_{j-1}+b_{s-1}$, $b_{j+1}=2b_j$ as well as $b_t=a_t,\, t>j+1$.  The important point is to observe that
$$
b_{j+1}=2b_j=2b_{j-1}+2b_{s-1}=2a_{j-1}+2a_{s-1}=a_{j+1}.
$$

We extend the definition of a marked step of a chain to a marked addition block.
\begin{definition}
Let $1=a_0<a_1<\cdots<a_{m+r}$ be an addition chain.  Assume that this chain contains the sequence
$$
a_j=2a_{j-1}<a_{j+1}<\cdots<a_{j+L}<a_{j+L+1}=2a_{j+L},
$$
where $a_{j+1},\ldots,a_{j+L}\in {\cal B}\cup {\cal C}\cup {\cal D}$.  We say that the addition block $a_{j+1},\ldots a_{j+L}$ is {\bf marked} if at least one element preceding an element of this block must be used in a subsequent block for the chain to be valid.
\end{definition}

We illustrate this definition with an example.  Consider an addition chain that begins with the sequence
\begin{eqnarray*}
& &a_0=1, a_1=2a_0=2, a_2=2a_1=4, a_3=2a_2=8, a_4=a_3+a_1=10, \\
& &a_5=a_4+a_2=14, a_6=2a_5=28,\ldots
\end{eqnarray*}
In this example, the addition block $a_4,a_5$ is marked.  Indeed, assume that neither $a_3$ nor $a_4$ is used in a subsequent addition block, then the beginning of the chain can be replaced by
\begin{eqnarray*}
& &b_0=1, b_1=2b_0=2, b_2=2b_1=4, b_3=b_2+b_0=5, b_4=b_3+b_1=7, \\
& &b_5=2b_4=14, b_6=2b_5=28,\ldots
\end{eqnarray*}
This example can be generalized into the following result.

\begin{lemma}\label{l13}
Let $1=a_0<a_1<\cdots<a_{\lfloor m+r\rfloor}$ be an addition chain.  Let $a_{j+1}, \ldots,a_{j+L}$ be an addition block of this chain.  For $j+1\le s\le j+L$, write
$$
a_s=a_{t(s)}+a_{r(s)},\, t(s)\ge r(s).
$$
Assume that for all $s$ with $j+1\le s\le j+L$,
$$
t(s),r(s)<j+1\Rightarrow t(s),r(s) \in {\cal A},
$$
then the addition block $a_{j+1}, \ldots,a_L$ is {\bf marked}.

\end{lemma}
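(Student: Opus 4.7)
The plan is to prove the contrapositive built into the definition of ``marked'': assuming the hypothesis holds and that no $a_s$ with $s \le j+L-1$ is used in any step after the block, I will exhibit an addition chain $b$ of the same length leading to the same integer $n$ such that $a$ dominates $b$, contradicting the validity of $a$. The construction mirrors the example preceding the lemma: defer the doubling that opens the block to the end of the block, effectively halving every block value.

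First I would establish by induction on $s$, for $j+1 \le s \le j+L$, that every block element $a_s$ is even. For $s = j+1$ no block element yet exists, so both summand indices $t(j+1), r(j+1)$ are necessarily $< j+1$, and the hypothesis forces both summands into $\mathcal{A}$, making both doublings and hence even. For the inductive step each summand of $a_s$ is either a previous block element (even by the inductive hypothesis) or an element with index $<j+1$, which by the hypothesis lies in $\mathcal{A}$ and is thus even.

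Next I would define $b_s := a_s$ for $s \le j-1$ and for $s \ge j+L$, and $b_s := a_{s+1}/2$ for $j \le s \le j+L-1$. By construction $b_{j+L} = 2 b_{j+L-1} = a_{j+L}$, so the tail of $b$ glues seamlessly to the tail of $a$. To see that each altered $b_s$ is a legitimate addition of two previous $b$'s, I would expand $a_{s+1} = a_{t(s+1)} + a_{r(s+1)}$ and treat each summand separately: if the parent index is $\ge j+1$, then $a_{\mathrm{parent}}/2 = b_{\mathrm{parent}-1}$ by construction, while if it is $< j+1$, then by hypothesis it lies in $\mathcal{A}$, so $a_{\mathrm{parent}}/2 = a_{\mathrm{parent}-1} = b_{\mathrm{parent}-1}$ (as $b$ agrees with $a$ for indices $\le j-1$). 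Hence $b_s = b_{t(s+1)-1} + b_{r(s+1)-1}$. Strict monotonicity of $b$ follows from that of $a$ together with the inequality $a_{j+1}/2 > a_{j-1}$, which holds because $a_{j+1} > a_j = 2a_{j-1}$.

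Finally $b_s \le a_s$ everywhere, with $b_s < a_s$ strictly for $s \in \{j, \ldots, j+L-1\}$, since each step of the block is non-doubling so $a_{s+1} < 2 a_s$ and thus $b_s = a_{s+1}/2 < a_s$. Under the assumption that no element $a_s$ with $s \le j+L-1$ is used in any step after the block, every summand used after position $j+L$ has index $\ge j+L$ and is therefore unchanged in $b$, so $b$ is a bona fide addition chain of length $\lfloor m+r\rfloor$ leading to $n$; since $a$ dominates $b$, this contradicts the validity of $a$, so the block must be marked. The main obstacle is the case analysis verifying $b_s = b_{t(s+1)-1} + b_{r(s+1)-1}$ at every altered position; this is precisely where the form of the hypothesis, including its handling of the mixed case of one parent in-block and one before-block, is consumed.
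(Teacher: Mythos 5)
Your proof is correct and takes essentially the same route as the paper: the paper's (very terse) proof consists precisely of exhibiting the chain $b_s=a_s$ for $s<j$, $b_s=a_{s+1}/2$ for $j\le s\le j+L-1$, $b_s=a_s$ for $s\ge j+L$, which the original chain dominates when no predecessor of a block element is reused after the block. Your write-up merely supplies the verifications the paper leaves implicit (evenness of the block elements via the hypothesis $t(s),r(s)<j+1\Rightarrow t(s),r(s)\in{\cal A}$, legitimacy of each halved step, monotonicity, and strict domination), so it is a fleshed-out version of the same argument.
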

\begin{proof}
Let $1=a_0<a_1<\cdots < a_{\lfloor m+r\rfloor}$ be an addition chain. Let $a_{j+1},\ldots, a_{j+L}$ be an addition block of this chain. Assume that none of the steps $a_j,\ldots, a_{j+L-1}$ are used in subsequent addition blocks.  Then we can show that this addition chain is not valid by comparing it to the addition chain  $b_0,\ldots, b_{\lfloor m+r\rfloor}$ defined by
$$
b_s=a_s,\, s<j,\, b_s=a_{s+1}/2,\, j\le s\le j+L-1,\, b_s=a_s,\, s\ge j+L. \qedhere
$$
\end{proof}

For any addition chain, we divide the addition blocks into two types.
\begin{definition}
We say that an addition block is of {\bf type 1} if either
\begin{itemize}
\item $a_0=1$ is used in an addition of the block.

\item An element of a previous addition block of type 1 is used in an addition of the block.
\end{itemize}
We say that an addition block is of {\bf type 2} otherwise.
\end{definition}

\begin{remark}
Observe that it follows directly from this definition and from Lemmas \ref{l3} and \ref{l13} that the addition blocks of type 2 are marked.
\end{remark}

We are now ready to complete the proof of Theorem \ref{t1}.

\subsection{Completing the proof of the upper bound}

Let $K_1$ and $K_2$ be the number of blocks of type 1 and type 2, respectively.  Let $D_1$ and $D_2$ stand for the total number of steps in ${\cal D}$ in blocks of type 1 and 2, respectively. Finally, let $T_1$ and $T_2$ be the number of elements of blocks of type 2 that are used in subsequent blocks of type 1 and type 2, respectively.
Since blocks of type 2 are marked, we have
$$
T_1+T_2\ge K_2.
$$
The number of ways to choose the elements added in steps of blocks of type 1 will be less than or equal to
$$
\exp\left(D_1\log m-(K_1+T_1)\log\log m(1+o(1))\right),\qquad m\to\infty.
$$
The number of ways to choose the elements added in steps of blocks of type 2 will be at most
$$
\exp\left(D_2\log m-T_2\log\log m(1+o(1))\right),\qquad m\to\infty.
$$
We get that the total number of ways to choose which integers are added in addition steps will be smaller than or equal to
$$
\exp\left(r\log m-K\log\log m(1+o(1))\right),\qquad m\to \infty,
$$
where $K$ is the total number of addition blocks in the addition chain.
Multiplying  this by the number of ways of choosing the elements in each set and using Lemma \ref{l4a}, we get
$$
F(m,r)\le \exp\left(r\log m+o\left(\frac{m\log\log m}{\log m}\right)\right),\qquad  m\to \infty.
$$
This completes the proof of Theorem \ref{t1}.

\subsection{Proof of the lower bound}

For the lower bound, we will provide a constructive proof. In this proof, we will make use of binary expansion.

Explicitly, the constructive process goes as follows.

\begin{itemize}
\item {\bf Step 1.} Choose postive integers  $k$ and $u$. We will later describe how to specify these integers.  For now,  assume that $ku\le m$ and that $\nobreak{2^u+k-u\le r}$.

\item {\bf Step 2.} Choose nonnegative integers $s_1,\ldots, s_{k-1}$ such that
$$
\sum_{j=1}^{k-1} s_j +ku=m.
$$

\item {\bf Step 3.} Define ${\cal U}$ as the set of odd positive integers in the interval $[2^{u-1},2^u]$.  Choose $k$ integers $U_1,\ldots, U_{k}$ in the set ${\cal U}$, possibly with repetitions.

\item {\bf Step 4.} Let the first steps of the chain be
$$
a_0=1,\, a_1=2,\, a_2=3,\,a_3=4,\ldots, a_{2^u-2}=2^u-1.
$$

\item {\bf Step 5.} Construct the rest of the chain as follows (using binary expansion):
\begin{align*}
&a_{2^u-1}=U_1,\\
&\vdots\\
&a_{2^u-1+s_1}=U_1\underbrace{0\ldots 0}_{s_1\, \mathrm{zeros}},\\
&\vdots\\
&a_{2^u-1+s_1+u}=U_1\underbrace{0\ldots 0}_{s_1\, \mathrm{zeros}}\underbrace{0\ldots 0}_{u\, \mathrm{zeros}},\\
&a_{2^u+s_1+u}=U_1\underbrace{0\ldots 0}_{s_1\, \mathrm{zeros}}U_2, \\
&\vdots\\
&N=a_{t}=U_1\underbrace{0\ldots 0}_{s_1\, \mathrm{zeros}}U_2 \underbrace{0\ldots 0}_{s_2\, \mathrm{zeros}}U_3\ldots U_{k-1}\underbrace{0\ldots 0}_{s_{k-1}\, \mathrm{zeros}} U_{k},
\end{align*}
\end{itemize}
with
$$
t=2^u+\sum_{j=1}^{k-1}s_j+(k-1)u+k-2.
$$

We can make the following remarks about the chains generated in such a manner.

\begin{itemize}

\item {\bf Remark 1.} The number of binary digits of $N$ is
$$
\sum_{j=1}^{k-1}s_j+ku.
$$

\item {\bf Remark 2.} In light of Step 2, the length of such an addition chain is
$$
2^u+\sum_{j=1}^{k-1}s_j+(k-1)u+k-2=2^u+m-u+k-2< m+r.
$$
Here, the inequality comes from our assumption that $2^u+k-u\le r$.

\item {\bf Remark 3.} Every distinct chain in this family of addition chains yields a different integer.

\item  {\bf Remark 4.} The number of ways of choosing the integers $U_1,\ldots, U_k$ is
$$
\left(\frac{2^u}{4}\right)^{k}.
$$

\item {\bf Remark 5.} The number of ways of choosing nonnegative integers $s_1,\ldots s_{k-1}$ such that
$$
s_1+\cdots+s_{k-1}=m-ku
$$
is equal to
$$
{m-ku+k-2\choose k-1}.
$$

\end{itemize}

From Remarks 1, 2, and 3, every chain constructed is valid and leads to a distinct integer.   From Remarks 4 and 5, we can conclude
\begin{equation}\label{novel1}
F\left(m,r\right)\ge \left(\frac{2^u}{4}\right)^{k}{m-ku+k-2\choose k-1}\ge \left(\frac{2^u(m-ku+k-2)}{4(k-1)}\right)^k.
\end{equation}
To continue, we will assume $r=cm/\log m$.  We will also assume
$2^u+k=r$, that is, $u=\log(r-k)/\log 2$.

Observe that since $k\le r$ and $u\le \log r/\log 2$, we have for $m$ large enough
\begin{equation}\label{novel2}
ku\le \frac{r\log r}{\log 2}\le \frac{cm}{\log 2}.
\end{equation}
Using (\ref{novel2}) in (\ref{novel1}), we get
$$
F\left(m,\frac{cm}{\log m}\right)\ge \left(\frac{2^u m}{k}\right)^k\left(\frac{1}{4}\left(1-\frac{c}{\log 2}\right)\right)^k.
$$
Under the assumptions that $c>0$ and $k=O(m/\log m)$, we deduce from this
$$
F\left(m,\frac{cm}{\log m}\right)\ge 2^{uk}\left(\frac{m}{k}\right)^k\exp\left(O\left(\frac{m}{\log m}\right)\right).
$$
Given that $k\leq r= cm/\log m$, we have
$$
F\left(m,\frac{cm}{\log m}\right)\ge 2^{uk}\exp(k\log\log m)\exp\left(O\left(\frac{m}{\log m}\right)\right).
$$
Using $u=\log(r-k)/\log 2$, it follows that
$$
F\left(m,\frac{cm}{\log m}\right)\ge \exp\left(k\log(r-k)+k\log\log m+O\left(\frac{m}{\log m}\right)\right).
$$
It remains to choose the value of $k$ to make the right-hand side as large as possible.

We set $r-k=y$ and obtain
\begin{eqnarray*}
F\left(m,\frac{cm}{\log m}\right)&\ge& \exp\left((r-y)\log y+(r-y)\log\log m+O\left(\frac{m}{\log m}\right)\right)\\
&=&\exp\left((r-y)\log y+r\log\log m+O(y\log\log m)+O\left(\frac{m}{\log m}\right)\right).
\end{eqnarray*}
The  value of $y$ maximizing $(r-y)\log y$ satisfies
$$
\frac{r}{y}=\log y+1,
$$
This is a transcendental equation and a good approximation of its solution is $y=r/\log r$.  For the sake of simplicity, we use the following approximation of $r/\log r$:
$$
y=\frac{cm}{(\log m)^2}.
$$
We can conclude
\begin{eqnarray*}
& &F\left(m,\frac{cm}{\log m}\right)\\
&\ge& \exp\left(\left(\frac{cm}{\log m}-\frac{cm}{(\log m)^2}\right)(\log m-2\log\log m)+\frac{cm\log\log m}{\log m}+O\left(\frac{m}{\log m}\right)\right)\\
&=&\exp\left(cm-\frac{cm\log\log m}{\log m}+O\left(\frac{m}{\log m}\right)\right).
\end{eqnarray*}
Observe that for any fixed $\varepsilon>0$ and for $m$ large enough,
$$
\exp\left(cm-\frac{cm\log\log m}{\log m}+O\left(\frac{m}{\log m}\right)\right)> \exp\left(cm-\frac{cm\log\log m}{\log m}-\varepsilon\frac{m\log\log m}{\log m}\right).
$$
This completes the proof of the lower bound in Theorem \ref{t1}.

\section{Conclusion}
It would be nice to find an argument that would allow one to close the gap between our lower and upper bounds.  However, the approach used in the proof of the lower bound (say with other values for $k$ and $u$ for instance) cannot yield a much better result.  This is due to the limited number of choices we have in this approach for each additive step.  Indeed, in each additive step, the number of ways of choosing the added integer is $2^u=cm/(\log m)^2$.

One could also consider the following approach to obtain a greater number of chains:
\begin{enumerate}
\item Construct an addition chain with $r$ additive steps of the form $a_j=a_{j-1}+a_t$ with $t<j-1$ and $m$ doubling steps.

\item Choose the position of the additive steps. The number of ways to make this choice is
$$
{ r+m \choose  r }=\exp\left((1+o(1))\frac{cm\log\log m}{\log m}\right).
$$

\item At each addition step, add any previous odd integer.  The number of ways to make this choice is $ r!$.

\end{enumerate}
The number of such chains is
$$
r!{ r+m \choose  r}=\exp\left(cm+o\left(\frac{m \log\log m}{\log m}\right)\right).
$$
The main difficulty with this approach is that some of these chains may lead to the same value. It is not clear how many of these chains lead to different integers. Answering this question would allow one to close the gap between the lower and upper bounds.

\vskip 20pt

\noindent
D\'epartement de math\'ematiques et de statistique, Universit\'e Laval, 1045 Avenue de la m\'edecine, Qu\'ebec, Qu\'ebec G1V0A6, Canada \\
Email address: jmdk@mat.ulaval.ca
\vskip 7pt

\noindent
D\'epartement de math\'ematiques et de statistique, Universit\'e Laval, 1045 Avenue de la m\'edecine, Qu\'ebec, Qu\'ebec G1V0A6, Canada \\
Email address: nicolas.doyon@mat.ulaval.ca
\vskip 7pt

\noindent
Department of Mathematics, University of Toronto, 40 St George Street, Toronto, Ontario M5S2E4, Canada \\
Email address: william.verreault@utoronto.ca

\end{document}